\theoremstyle{plain}
 \newtheorem{theorem}{Theorem}[section]
 \newtheorem{lemma}[theorem]{Lemma}
 \newtheorem{corollary}[theorem]{Corollary}
 \newtheorem{proposition}[theorem]{Proposition}
 \newtheorem{claim}[theorem]{Claim}
\theoremstyle{definition}
 \newtheorem{definition}[theorem]{Definition}
 \newtheorem{example}[theorem]{Example}
 \newtheorem{remark}[theorem]{Remark}
\numberwithin{equation}{section}
\renewcommand{\a}{\alpha}
\renewcommand{\b}{\beta}
\newcommand{\g}{\gamma}
\renewcommand{\k}{\kappa}
\renewcommand{\o}{\omega}
\renewcommand{\l}{\lambda}
\newcommand{\sst}{\subseteq}
\newcommand{\cu}{\mathcal U}
\newcommand{\I}{\mathcal I}
\newcommand{\seq}[1]{\left<#1\right>}
\newcommand{\set}[1]{\left\{#1\right\}}
\newcommand{\abs}[1]{\left\vert#1\right\vert}
\newcommand{\id}{\operatorname{id}}
\renewcommand{\subset}{\subseteq}
\author[B. Kuzeljevi\'c]{Bori\v sa Kuzeljevi\'c}
\thanks{}
\address[Kuzeljevi\'c]{Department of Mathematics and Informatics, Faculty of Sciences, University of Novi Sad, Serbia.}
\email{borisha@dmi.uns.ac.rs}
\urladdr{\url{https://people.dmi.uns.ac.rs/\textasciitilde borisha}}
\dedicatory{}
\author[S. Milo\v sevi\'c]{Stepan Milo\v sevi\'c}
\thanks{}
\address[Milo\v sevi\'c]{Faculty of Technical Sciences, University of Novi Sad, Serbia.}
\email{stepanmilosevic@uns.ac.rs}
\keywords{Tukey reducibility, topological group, tightness of a space, basic order}
\subjclass[2010]{22A05, 54A20, 03E04}
\thanks{}
\title{Cofinal types and topological groups}
\date{\today}
\begin{document}

\begin{abstract}
    The purpose of this note is to start the systematic analysis of cofinal types of topological groups.
\end{abstract}

\maketitle

\section{Introduction}
In this note we start a systematic analysis of cofinal types of topological groups.
There has been much work on this topic, see for example \cite{feng, rejection, gartside, basicstevo, kakol}, however, what we are aiming in this paper is to provide a uniform language for these results, and to extend some of the known results.
For all undefined notions we refer the reader to Section \ref*{s:prel}.

When we say \emph{the cofinal type of a topological group}, we have in mind the cofinal type of a local base of the identity in a topological group.
Note that this makes sense as every two local bases of the identity of a topological group are cofinally similar. 
Let us also point out that since each topological group is a homogeneous topological space, in fact all local bases of all points in a topological group are cofinally similar.

Since Tukey maps are typically used to compare cofinalities of various directed sets, we will also be using these to compare cofinal types of topological groups.
Hence, we will say that a topological group $G$ is Tukey reducible to a topological group $G'$ if there is a Tukey map from a local base of the identity of $G$ to a local base of the identity of $G'$, and we will write $$G\le_T G'.$$
We will say that topological groups $G$ and $G'$ have the same cofinal type, or that they are cofinally similar, if $G\le_T G'$ and $G'\le_T G$.
Similarly, for a topological group $G$ and a directed set $D$, we will say that $G\le_T D$ if there is a Tukey map from a local base of the identity of $G$ to $D$.
All the other possible equalities and inequalities are defined in a similar manner.

Note that in this language, the well-known Birkhoff-Kakutani theorem states that a topological group $G$ is metrizable if and only if $G\le_T \o$.
Clearly, from a topological viewpoint, discrete groups (i.e. those groups $G$ with $G\equiv_T 1$) are the simplest possible, and one might be tempted to assume that metrizable non-discrete groups come as the next class in terms of topological complexity.
However, in our setting, since both $\o\not\le_T \o_1$ and $\o_1\not\le_T \o$ hold, we see that all topological groups whose cofinal type is $\o_1$ are incomparable with metrizable groups.
This shows that the scale for comparing the order of topological complexity of topological groups is not linear.  

\section{Preliminaries}\label{s:prel}

We use mostly standard terminology from set theory and topology.
For topological groups, our main reference is \cite{knjiga}.
Note, however, that there may be minor differences in the notation.

Let us point out that whenever we consider a topological group, its identity will be denoted $e$.
When there is a possibility for a confusion, the identity of a group $G$ will be denoted $e_G$.
In this paper, a topological group is a group $(G,\cdot)$ with Hausdorff topology $\tau$ such that both $\cdot$ and ${}^{-1}$ are continuous operations in $\tau$.
Typically, we will point out what is the operation in the group, and what is the topology on the group, only in cases when there is a danger of misunderstanding.

If $A$ is a set, then $\abs{A}$ denotes the cardinality of a set $A$.
For a cardinal $\k$ and a set $A$, we denote $[A]^{<\k}=\set{X\sst A: \abs{X}<\k}$ and $[A]^{\k}=\set{X\sst A:\abs{X}=\k}$.
Note that all other variations on this notion, for example $[A]^{\le\k}$, are defined analogously.
For a function $f:X\to Y$, and $A\sst X$ and $B\sst Y$ we define the direct image of $A$ via $f$ by $f[A]$, and the preimage of $B$ via $f$ by $f^{-1}[B]$.

Recall that $(P,\le_P)$ is \emph{a partial order} if $P$ is a set and $\le_P$ is a reflexive, antisymmetric and transitive binary relation on $P$.
If $(P,\le_P)$ is a partial order, we say that $X\sst P$ is \emph{unbounded} in $P$ if there is no $z\in P$ such that $x\le_P z$ for all $x\in X$, while we say that $Y\sst P$ is \emph{cofinal} in $P$ if for every $x\in P$ there is some $y\in Y$ such that $x\le_P y$.

\begin{definition}\label{usmeren_skup}
    We say that a partially ordered set $(D,\le_D)$ is \emph{a directed set} if for every $x$ and $y$ in $D$ there is some $z\in D$ such that $x\le_D z$ and $y\le_D z$.  
\end{definition}

We will always denote the ordering of a directed set $D$ by $\le_D$, so we will be able to write $D$ in place of $(D,\le_D)$.
Recall also that, for a regular infinite cardinal $\k$, a partially ordered set $D$ is $\k$-directed if for any $\a<\k$ and every collection $\set{x_{\xi}:\xi<\a}$ of $\a$ many elements from $D$, there is $x\in D$ such that $x_{\xi}\le x$ for each $\xi<\a$.
Thus, $D$ being directed means that $D$ is $\o$-directed in this language.

\begin{definition}
    Let $D$ and $E$ be directed sets. We say that $D$ is \emph{Tukey reducible} to $E$ if there is a map $f:D\to E$ such that for every unbounded set $X\sst D$ the set $f[X]$ is unbounded in $E$. 
    We write this $D\le_T E$, and we call such a map \emph{Tukey map}.
\end{definition}

Note that this is equivalent to saying that the preimage of every set bounded in $E$ is also bounded in $D$.
The following properties of directed sets and cofinal types, up to Remark \ref{countabledirected}, can be found in \cite{stevo}.

\begin{definition}
    Let $D$ and $E$ be directed sets.
    We say that a map $g:E\to D$ is \emph{cofinal} if for every $x\in D$ there is $y\in E$ such that $x\le_D g(z)$ whenever $y\le_E z$.
\end{definition}

\begin{lemma}
    Let $D$ and $E$ be directed sets.
    Then $D\le_T E$ if and only if there is a cofinal map $g:E\to D$.
\end{lemma}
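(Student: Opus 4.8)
The plan is to prove both implications by direct constructions, converting a Tukey map $f\colon D\to E$ into a cofinal map $g\colon E\to D$ and conversely. Throughout I will use the equivalent formulation of the Tukey condition recorded after the definition of a Tukey map: a map $f\colon D\to E$ is Tukey exactly when the preimage $f^{-1}[B]$ of every bounded set $B\sst E$ is bounded in $D$.

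For the implication assuming a cofinal map $g\colon E\to D$, I would manufacture a Tukey map as follows. For each $x\in D$, cofinality of $g$ supplies a witness $y_x\in E$ with $x\le_D g(z)$ whenever $z\ge_E y_x$; using choice to fix these witnesses, set $f(x)=y_x$. To check the preimage condition, let $B\sst E$ be bounded by some $b\in E$. If $x\in f^{-1}[B]$ then $y_x=f(x)\in B$, so $y_x\le_E b$, i.e.\ $b$ lies above the witness $y_x$; hence $x\le_D g(b)$. Thus $g(b)$ is an upper bound of $f^{-1}[B]$ in $D$, so $f$ is Tukey and $D\le_T E$.

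For the converse, assume $f\colon D\to E$ is a Tukey map. The key idea is to apply the preimage condition to the principal down-sets of $E$. For each $y\in E$ the set $\set{z\in E: z\le_E y}$ is bounded by $y$, so $f^{-1}[\set{z\in E: z\le_E y}]$ is bounded in $D$; using directedness of $D$ together with choice, let $g(y)\in D$ be an upper bound of this preimage. To see that $g$ is cofinal, fix $x\in D$ and take the candidate witness $y=f(x)$. If $z\ge_E f(x)$, then $f(x)\le_E z$ gives $x\in f^{-1}[\set{w\in E: w\le_E z}]$, whence $x\le_D g(z)$ by the choice of $g(z)$. This is precisely the cofinality requirement, which finishes the equivalence.

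I expect the main conceptual step to be identifying the correct auxiliary objects in each direction: the cofinality witnesses $y_x$ in the first construction, and the preimages of the principal down-sets $\set{z\in E:z\le_E y}$ in the second. Once these are isolated, the verifications are short. The only points requiring care are that both constructions invoke the axiom of choice (to select the witnesses and the upper bounds), and that the definition of $g$ relies on $D$ being directed so that the relevant bounded subsets genuinely admit an upper bound.
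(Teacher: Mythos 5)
Your proof is correct: both constructions (choosing cofinality witnesses $y_x$ to build a Tukey map, and bounding the preimages of principal down-sets $\set{z\in E: z\le_E y}$ to build a cofinal map) are exactly the standard argument, which the paper itself does not reproduce, since it cites this lemma from Todorcevic's \emph{Directed sets and cofinal types}. One cosmetic remark: under the paper's definitions, a bounded subset of $D$ by definition admits an upper bound, so your appeal to directedness of $D$ in the second construction is not actually needed, only choice is.
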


\begin{lemma}\label{l:cofinalmap}
	Let $D$ and $E$ be directed sets. Then $f:D\to E$ is a cofinal map iff $f[A]$ is cofinal in $E$ for every $A$ which is cofinal in $D$.
\end{lemma}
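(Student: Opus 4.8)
The plan is to first unwind what it means for the map $f:D\to E$ in the statement to be cofinal. Reading the preceding definition with the roles of $D$ and $E$ interchanged, $f$ is cofinal precisely when for every $x\in E$ there is $y\in D$ such that $x\le_E f(z)$ holds whenever $y\le_D z$; informally, $f$ eventually rises above every element of $E$. With this reformulation in hand I would prove the two implications separately.

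For the forward direction, assume $f$ is cofinal and let $A\sst D$ be cofinal in $D$. To see that $f[A]$ is cofinal in $E$, fix any $x\in E$ and use cofinality of $f$ to obtain $y\in D$ with $x\le_E f(z)$ for all $z\ge_D y$. Since $A$ is cofinal in $D$, pick $a\in A$ with $y\le_D a$; then $x\le_E f(a)\in f[A]$, so $f[A]$ is cofinal.

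For the converse I would argue by contraposition. Suppose $f$ is not cofinal, so there is a fixed $x\in E$ such that for every $y\in D$ there is some $z\ge_D y$ with $x\not\le_E f(z)$. The key step is to consider the set $A=\set{z\in D: x\not\le_E f(z)}$. The displayed failure of cofinality says exactly that $A$ meets every upper cone, i.e.\ $A$ is cofinal in $D$; yet by its very definition $x\not\le_E f(a)$ for every $a\in A$, so $x$ witnesses that $f[A]$ is not cofinal in $E$. This produces a cofinal $A\sst D$ whose image fails to be cofinal, contradicting the hypothesis.

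The only genuine content lies in the converse, and the point to get right is the choice of the witnessing cofinal set: one must package the single element $x\in E$ obstructing cofinality into the set $A$ of all points where $f$ fails to rise above $x$, and then observe that this same $x$ simultaneously certifies both that $A$ is cofinal in $D$ and that $f[A]$ is not cofinal in $E$. The forward direction is routine once the definition has been transposed to a map out of $D$.
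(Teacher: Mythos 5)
Your proof is correct. The paper itself states this lemma without proof (it is one of the facts quoted from Todorcevic's \emph{Directed sets and cofinal types}), so there is no argument to compare against; your two-direction argument --- transposing the definition of cofinal map to a map out of $D$, the routine forward implication, and the contrapositive using the set $A=\set{z\in D: x\not\le_E f(z)}$, which the failure of cofinality at $x$ makes cofinal in $D$ while $x$ itself witnesses that $f[A]$ is not cofinal in $E$ --- is exactly the standard proof and is complete.
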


\begin{definition}
    Let $D$ and $E$ be directed sets.
    If $D\le_T E$ and $E\le_T D$, then we say that $D$ and $E$ are \emph{cofinally similar} or that $D$ and $E$ have the same \emph{cofinal type}. We write this $D\equiv_T E$. 
\end{definition}

\begin{theorem}\label{cofinalsimilarity}
	Let $D$ and $E$ be directed sets. Then $D\equiv_T E$ iff there is a poset $P$ such that both $D$ and $E$ can be embedded as cofinal subsets of $P$.
\end{theorem}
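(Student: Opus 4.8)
The plan is to prove both implications, dispatching the right-to-left direction quickly and isolating the construction of the common poset as the real work.

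For the direction from right to left, I would first establish the auxiliary fact that if a directed set $D$ is order-isomorphic to a cofinal subset of a poset $P$, then $D\equiv_T P$; once this is available the claim follows at once, since then $D\equiv_T P\equiv_T E$ and $\equiv_T$ is transitive (a composition of Tukey maps is a Tukey map). To prove the auxiliary fact I identify $D$ with a cofinal subset of $P$ and consider the inclusion $\iota:D\to P$. On one hand $\iota$ is a Tukey map: if $X\sst D$ is unbounded in $D$ but $\iota[X]=X$ were bounded in $P$ by some $p$, then choosing $d\in D$ with $p\le_P d$ (cofinality) would bound $X$ inside $D$, since the order on $D$ is the restriction of $\le_P$ — a contradiction; hence $D\le_T P$. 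On the other hand $\iota$ is a cofinal map: by Lemma \ref{l:cofinalmap} it suffices to check that $A$ is cofinal in $P$ for every $A$ cofinal in $D$, which is immediate by passing from an arbitrary $p\in P$ first up into $D$ and then up within $A$. By the lemma characterizing $\le_T$ through cofinal maps this gives $P\le_T D$, so $D\equiv_T P$.

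For the direction from left to right the task is to manufacture a single poset $P$ carrying isomorphic cofinal copies of both $D$ and $E$. Using $D\le_T E$ and $E\le_T D$, fix cofinal maps $\psi:E\to D$ and $\phi:D\to E$. The natural attempt is to take $P$ to be the disjoint union $D\sqcup E$, to keep the original orders on the two pieces, and to add cross-comparisons encoding the maps — declaring $d$ below $e$ when $\phi(d)\le_E e$ and $e$ below $d$ when $\psi(e)\le_D d$ — and then to pass to the partial order induced by this relation, collapsing any pair $x,y$ with $x\le y\le x$. With such a definition both copies are easily seen to be cofinal: every $d$ lies below $\phi(d)\in E$ and every $e$ lies below $\psi(e)\in D$, which, combined with the cofinal-map property, pushes any element of $P$ above an element of each copy.

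The main obstacle is verifying that this relation is genuinely a partial order whose restriction to each piece recovers $\le_D$ and $\le_E$ — that is, that transitivity through the bridge introduces no spurious comparabilities inside $D$ or inside $E$, and that the collapse does not identify distinct elements of a single copy. This is exactly where the freedom in choosing the cofinal maps must be used: I would select $\phi$ and $\psi$ so that the round trips dominate the identities ($\psi\circ\phi\ge\id_D$ pointwise, and symmetrically on $E$), which is the content of a Tukey (Galois) connection and forces a derived chain $d\le e\le d'$ to yield $d\le_D d'$. The delicate point is that a single connection controls only one of the two compositions, so to obtain faithfulness on both sides at once I would first replace $D$ and $E$ by the cofinally equivalent posets of their bounded subsets ordered by inclusion — where the required adjoints are available as honest intersections — carry out the gluing there, and then transport the outcome back using the right-to-left fact already established. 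Checking these order axioms and the two faithful cofinal embeddings is the crux; the cofinality bookkeeping itself is routine.
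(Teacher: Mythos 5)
Your right-to-left direction is correct and complete (it is essentially the paper's Remark \ref{cofinalsubset}): the inclusion of a directed cofinal subset is both a Tukey map and a cofinal map, so $D\equiv_T P\equiv_T E$; note only that you should observe $P$ is automatically directed, since the Tukey/cofinal-map lemmas you invoke are stated for directed sets. For the left-to-right direction, be aware that the paper itself offers no proof --- Theorem \ref{cofinalsimilarity} is quoted from \cite{stevo}, where it goes back to Tukey --- so your attempt must stand on its own, and it does not: the gap sits exactly at the step you yourself call the crux. Your glued order on $D\sqcup E$ is transitive, and faithful on each piece, only if the bridge maps are monotone and satisfy \emph{both} $\psi\circ\phi\ge\mathrm{id}_D$ and $\phi\circ\psi\ge\mathrm{id}_E$ pointwise. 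Nothing you invoke produces such a pair. A Galois--Tukey connection ($f(d)\le_E e\Rightarrow d\le_D g(e)$) involves no monotonicity at all and controls a single implication; and an adjoint pair $\Phi\dashv\Psi$ --- which is what ``honest intersections'' (or unions) in the bounded-subsets posets actually give you --- places the two composites on \emph{opposite} sides of the identity: $\Psi\Phi\ge\mathrm{id}$ but $\Phi\Psi\le\mathrm{id}$. So after your move to $\mathcal{B}(D)$ and $\mathcal{B}(E)$, one of the two transitivity checks ($d\le e\le d'$ versus $e\le d\le e'$) still fails, and the proposal contains no construction of, nor argument for, a pair with both round trips above the identity. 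That requirement is genuinely stronger than what Tukey equivalence hands you: even in the trivial instance where $E$ \emph{is} a cofinal subset of $D$ (so the theorem's conclusion holds with $P=D$), the natural candidate maps between $\mathcal{B}(D)$ and $\mathcal{B}(E)$ fail mutual domination, and obtaining it would amount to a monotone cofinal selection, which is not available in general. So the hard direction is not merely unfinished; the mechanism proposed to finish it is the wrong one.

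The repair keeps your two good ideas --- pass to bounded-subsets posets, and exploit their lattice-like structure --- but replaces the gluing by a product, which needs much less. Embed $D$ cofinally into $\mathcal{B}(D)$ (bounded subsets of $D$ under $\subseteq$) via $d\mapsto\hat{d}=\{x\in D: x\le_D d\}$, and similarly $E$ into $\mathcal{B}(E)$. From a Galois--Tukey pair $f_1:D\to E$, $g_1:E\to D$ with $f_1(d)\le_E e\Rightarrow d\le_D g_1(e)$, define $\nu:\mathcal{B}(E)\to\mathcal{B}(D)$ by $\nu(B)=\bigcup_{e\in B}\{d\in D:\forall e'\ge_E e\ (d\le_D g_1(e'))\}$; this is well defined (each summand is bounded by $g_1(e)$ and increases with $e$), monotone, and has cofinal image, since for $A$ bounded by $a$ one gets $A\subseteq\nu(\widehat{f_1(a)})$ directly from the connection. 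Symmetrically, the pair witnessing $E\le_T D$ yields a monotone $\mu:\mathcal{B}(D)\to\mathcal{B}(E)$ with cofinal image. Now take $P=\mathcal{B}(D)\times\mathcal{B}(E)$ with the coordinatewise order: $A\mapsto(A,\mu(A))$ and $B\mapsto(\nu(B),B)$ are order embeddings with cofinal ranges --- here monotonicity plus cofinal image in each direction \emph{separately} is all that is needed, with no condition on the compositions --- and composing cofinal embeddings ($D$ into $\mathcal{B}(D)$ into $P$, and $E$ into $\mathcal{B}(E)$ into $P$) proves the theorem.
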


\begin{remark}\label{cofinalsubset}
	Note that for directed sets $D$ and $E$, if $D$ is a cofinal subset of $E$, then $D\equiv_T E$.
\end{remark}

\begin{remark}\label{countabledirected}
	Note that if $D$ is a countable directed set, then $D\le_T \o$.
	In particular, either $D\equiv_T 1$ or $D\equiv_T \o$.
\end{remark}


\begin{definition}\label{d:kbox}
	Let $\set{X_i:i\in I}$ be a collection of topological spaces, and let $\k$ be a regular infinite cardinal.
	We define \emph{the $\k$-box topology} on $\prod_{i\in I}X_i$ as the topology given by the base
	$$\set{\bigcap_{i\in K}\pi_i^{-1}[U_i]: K\in [I]^{<\k}, (\forall i\in K)\ U_i\ \mbox{is open in }X_i}.$$
\end{definition}
   
Note that, with this definition, the product topology on $\prod_{i\in I}X_i$ is just $\o$-box topology, while the box topology is just $\abs{I}^+$-box topology.

\begin{theorem}\label{t:kbox}
	Let $\set{G_i:i\in I}$ be a collection of topological groups and $\k$ an infinite regular cardinal.
	For each $i\in I$, let $\mathcal{N}_i$ be a local base of the identity $e_i$ of the group $G_i$.
	Then the group $G=\prod_{i\in I}G_i$ with the $\k$-box topology is a topological group.
	The local base of the identity of this group is
	$$\mathcal{N}=\set{\bigcap_{i\in K}\pi_i^{-1}[B_i]:K\in [I]^{<\k},\ (\forall i\in K)\ B_i\in\mathcal{N}_i}.$$
\end{theorem}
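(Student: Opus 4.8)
The plan is to verify the three defining properties of a topological group for $G=\prod_{i\in I}G_i$ with the $\k$-box topology—Hausdorffness, continuity of the multiplication $m\colon G\times G\to G$, and continuity of inversion—and then to check separately that $\mathcal{N}$ is a local base at the identity $e=(e_i)_{i\in I}$ (the group operation being coordinatewise). Hausdorffness is the easiest: given distinct $g,h\in G$, I would pick $i\in I$ with $g_i\neq h_i$, separate them in the Hausdorff group $G_i$ by disjoint open $U,V$, and observe that $\pi_i^{-1}[U]$ and $\pi_i^{-1}[V]$ are disjoint $\k$-box-open sets separating $g$ and $h$, each using only the single coordinate $i$ (and $1<\k$).

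For continuity of the operations I would first reduce everything to one structural observation. I would establish that the natural bijection $G\times G\to\prod_{i\in I}(G_i\times G_i)$ is a homeomorphism, where the domain carries the product of two copies of the $\k$-box topology and the codomain carries the $\k$-box topology of the spaces $G_i\times G_i$. Concretely, a basic open box $V_1\times V_2$ with $V_j=\bigcap_{i\in K_j}\pi_i^{-1}[U_i^j]$ corresponds to $\bigcap_{i\in K_1\cup K_2}\rho_i^{-1}[W_i]$, where $\rho_i$ denotes the $i$-th projection of $\prod_i(G_i\times G_i)$ and each $W_i$ is an open rectangle in $G_i\times G_i$; the point is that $\abs{K_1\cup K_2}<\k$ since $\k$ is infinite, so the index set stays admissible, and the correspondence is plainly reversible on basic sets. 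This identification is the step I expect to require the most care, since one must check both inclusions between the two bases and confirm that the admissibility condition $[I]^{<\k}$ is preserved in each direction.

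Once this homeomorphism is in place, continuity of both operations follows from the general fact that if $f_i\colon X_i\to Y_i$ are continuous, then $\prod_i f_i\colon\prod_i X_i\to\prod_i Y_i$ is continuous for the $\k$-box topologies: the preimage of a basic open $\bigcap_{i\in K}\pi_i^{-1}[U_i]$ is exactly $\bigcap_{i\in K}\pi_i^{-1}[f_i^{-1}[U_i]]$, which is again admissible because $K\in[I]^{<\k}$ and each $f_i^{-1}[U_i]$ is open. Applying this with $f_i$ the coordinate multiplications $m_i\colon G_i\times G_i\to G_i$ yields continuity of $m=\prod_i m_i$ via the homeomorphism of the previous paragraph, and applying it with $f_i$ the coordinate inversions yields continuity of inversion on $G$. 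This establishes that $G$ is a topological group.

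Finally I would check that $\mathcal{N}$ is a local base at $e$. Each element of $\mathcal{N}$ is an open neighborhood of $e$, since every $B_i\in\mathcal{N}_i$ is an open set containing $e_i$, so each $\pi_i^{-1}[B_i]$ is $\k$-box-open and a $<\k$ intersection of them is as well. Conversely, given any basic $\k$-box-open $W=\bigcap_{i\in K}\pi_i^{-1}[U_i]$ with $e\in W$, for each $i\in K$ I would use that $\mathcal{N}_i$ is a local base at $e_i$ to pick $B_i\in\mathcal{N}_i$ with $e_i\in B_i\sst U_i$; then $\bigcap_{i\in K}\pi_i^{-1}[B_i]\in\mathcal{N}$ and is contained in $W$. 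Hence $\mathcal{N}$ refines the basic $\k$-box neighborhoods of $e$ and consists of such neighborhoods, so it is a local base at the identity, as claimed.
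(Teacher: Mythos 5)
Your proof is correct, but it takes a genuinely different route from the paper on the continuity part. The paper verifies in one stroke that $(x,y)\mapsto xy^{-1}$ is continuous: given a basic neighborhood $U=\bigcap_{i\in K}\pi_i^{-1}[U_i]$ of $ab^{-1}$, it picks, for each $i\in K$, open $V_i\ni a_i$ and $W_i\ni b_i$ with $V_iW_i^{-1}\sst U_i$, and takes $V=\bigcap_{i\in K}\pi_i^{-1}[V_i]$, $W=\bigcap_{i\in K}\pi_i^{-1}[W_i]$ --- one paragraph, no auxiliary lemmas. You instead treat multiplication and inversion separately and factor them through two structural facts: (i) the natural bijection $G\times G\to\prod_{i\in I}(G_i\times G_i)$ is a homeomorphism for the relevant ($\k$-box versus product of $\k$-box) topologies, and (ii) $\k$-box products of continuous maps are continuous. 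Both facts are true and your use of them is sound; the one place to be careful, which you rightly flag, is (i): the basic sets of the $\k$-box topology on $\prod_{i\in I}(G_i\times G_i)$ involve \emph{arbitrary} open $W_i\sst G_i\times G_i$, not just rectangles, so the direction you actually need (continuity of the natural map into $\prod_{i\in I}(G_i\times G_i)$) is not a literal matching of basic sets but a pointwise refinement: inside each $W_i$ one shrinks to an open rectangle around the given point, and the resulting $<\k$ intersection of rectangle preimages is a basic set of the product of two $\k$-box topologies. With that phrasing the identification is complete, and the observation $\abs{K_1\cup K_2}<\k$ handles admissibility. What your route buys is modularity and reusability --- (i) and (ii) are general facts about $\k$-box topologies, and you also check Hausdorffness explicitly, which the paper leaves implicit even though Hausdorffness is part of its definition of a topological group; what the paper's route buys is brevity, by exploiting the single map $(x,y)\mapsto xy^{-1}$. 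Your argument for $\mathcal{N}$ being a local base at $e$ coincides with the paper's.
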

   
\begin{proof}
   First we prove that $G$ with the $\k$-box topology is a topological group.
   Note that it is enough to show that the map $(x,y)\mapsto xy^{-1}$ is continuous.
   Let $U$ be a basic open set containing $ab^{-1}$.
   Let $a=\seq{a_i:i\in I}$ and $b=\seq{b_i:i\in I}$.
   Then $U=\bigcap_{i\in K}\pi_i^{-1}[U_i]$ where $\abs{K}<\k$ and each $U_i$ is open in $G_i$ ($i\in K$).
   Then $a_ib_i^{-1}\in U_i$ for $i\in K$, hence there are open sets $V_i$ and $W_i$ in $G_i$ such that $a_i\in V_i$ and $b_i\in W_i$ and $V_iW_i^{-1}\sst U_i$.
   Let $V=\bigcap_{i\in K}\pi_i^{-1}[V_i]$ and $W=\bigcap_{i\in K}\pi_i^{-1}[W_i]$.
   Clearly $a\in V$ and $b\in W$.
   Also, $VW^{-1}\sst U$, so $G$ is a topological group with this topology.

   Now we show that $\mathcal{N}$ is a local base of the identity.
   Clearly, each element of $\mathcal{N}$ is open in the $\k$-box topology.
   To prove that $\mathcal{N}$ is a local base of the identity for this topology, take any open set $U$ in $G$ containing $e$.
   Then there is a basic open set $W$ such that $e\in W\sst U$.
   Since $W$ is basic, there is a set $K\sst I$ of cardinality less than $\k$ such that $W=\bigcap_{i\in K}\pi_i^{-1}[U_i]$ where $U_i$ is an open set in $G_i$ for $i\in K$. 
   Since $e_i\in U_i$ for each $i\in K$, there are sets $B_i\in \mathcal{N}_i$ such that $e_i\in B_i\sst U_i$.
   Then $e\in \bigcap_{i\in K}\pi_i^{-1}[B_i]\sst W$ and $\bigcap_{i\in K}\pi_i^{-1}[B_i]\in\mathcal{N}$.
   This finishes the proof that $\mathcal{N}$ is a local base of $e$ in $G$.
\end{proof}

\section{Tukey order in the class of topological groups}

\begin{lemma}\label{well_defined}
    Let $G$ be a topological group with the identity $e$. 
    Let $(\mathcal{N}_1,\supseteq)$ and $(\mathcal{N}_2,\supseteq)$ be local bases of $e$ in $G$.
    Then $\mathcal{N}_1\equiv_T \mathcal{N}_2$.
\end{lemma}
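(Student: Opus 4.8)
The plan is to avoid constructing Tukey maps between $\mathcal{N}_1$ and $\mathcal{N}_2$ by hand, and instead to exhibit both as cofinal subsets of a single directed set, so that the conclusion drops out of Remark \ref{cofinalsubset}. Concretely, I would set $\mathcal{N}=\mathcal{N}_1\cup\mathcal{N}_2$, equipped with the order $\supseteq$ that both bases already carry, and argue that $\mathcal{N}$ is directed and that each $\mathcal{N}_i$ is cofinal in it.

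First I would check that $(\mathcal{N},\supseteq)$ is a directed set. Given $U,V\in\mathcal{N}$, the intersection $U\cap V$ is again a neighborhood of $e$, so since each of $\mathcal{N}_1$ and $\mathcal{N}_2$ is a local base of $e$, there is some $W\in\mathcal{N}_1\sst\mathcal{N}$ with $W\sst U\cap V$; then $U\supseteq W$ and $V\supseteq W$, which is exactly $U\le W$ and $V\le W$ in the order $\supseteq$. The same computation shows that each $\mathcal{N}_i$ is directed in its own right, so the objects in the statement are genuine directed sets.

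Next I would verify cofinality. Take any $W\in\mathcal{N}$; it is a neighborhood of $e$, so by the local-base property of $\mathcal{N}_1$ there is $U\in\mathcal{N}_1$ with $U\sst W$, i.e.\ $W\le U$ in the order $\supseteq$. Hence $\mathcal{N}_1$ is cofinal in $\mathcal{N}$, and by symmetry so is $\mathcal{N}_2$. Applying Remark \ref{cofinalsubset} to each inclusion gives $\mathcal{N}_1\equiv_T\mathcal{N}$ and $\mathcal{N}_2\equiv_T\mathcal{N}$, and transitivity of $\equiv_T$ (which follows from the fact that a composition of Tukey maps is again a Tukey map) yields $\mathcal{N}_1\equiv_T\mathcal{N}_2$. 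An equivalent packaging would take $P$ to be the filter of all open neighborhoods of $e$ ordered by $\supseteq$, observe that both $\mathcal{N}_1$ and $\mathcal{N}_2$ are cofinal subsets of $P$, and invoke Theorem \ref{cofinalsimilarity} directly.

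The mathematical content here is light, so there is no real obstacle; the one place that demands care is the bookkeeping of the reverse-inclusion order, where one must consistently read "smaller neighborhood" as "larger element" so that directedness and cofinality point the right way. It is also worth noting explicitly that the intersection argument only uses that the members of a local base are neighborhoods of $e$ (whose pairwise intersections are again neighborhoods), so nothing beyond the defining property of a local base is needed.
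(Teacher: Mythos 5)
Your proposal is correct and follows essentially the same route as the paper: both take the union $\mathcal{N}_1\cup\mathcal{N}_2$, observe it is a directed set (being itself a local base of $e$) in which each $\mathcal{N}_i$ sits cofinally, and conclude via the cofinal-subset criterion (the paper invokes Theorem \ref{cofinalsimilarity} directly, while you route through Remark \ref{cofinalsubset} and transitivity of $\equiv_T$, an immaterial difference you yourself note). Your added verifications of directedness and cofinality are details the paper leaves implicit.
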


\begin{proof}
	First, note that $\mathcal{N}_3 = \mathcal{N}_1 \cup \mathcal{N}_2$ is a local base of the identity $e$ of the topological group $G$. Hence a directed set under $\supseteq$.
	Clearly, both $\mathcal{N}_1$ and $\mathcal{N}_2$ are cofinal subsets of $\mathcal{N}_3$, so $\mathcal{N}_1\equiv_T \mathcal{N}_2$ by Theorem \ref{cofinalsimilarity}.
\end{proof}

\begin{definition}
    Let $G$ be a topological group with the identity $e$ and $D$ a directed set.
    We say that $G\le_T D$ ($G\ge_T D$ or $G\equiv_T D$) if there is a local base of $e$ in $G$, say $(\mathcal{N},\supseteq)$, such that $\mathcal{N}\le_T D$ ($\mathcal{N}\ge_T D$ or $\mathcal{N}\equiv_T D$).
\end{definition}

\begin{definition}
    Let $G$ be a topological group with the identity $e_G$ and $H$ a topological group with the identity $e_H$.
    We say that $G\le_T H$ ($G\ge_T H$ or $G\equiv_T H$) if there is a local base of $e_G$ in $G$, say $(\mathcal{N}_G,\supseteq)$, and a local base of $e_H$ in $H$, say $(\mathcal{N}_H,\supseteq)$ such that $\mathcal{N}_G\le_T \mathcal{N}_H$ ($\mathcal{N}_G\ge_T \mathcal{N}_H$ or $\mathcal{N}_G\equiv_T \mathcal{N}_H$).
\end{definition}

Note that these notions are well defined by Lemma \ref{well_defined}.

\begin{lemma}
	\label{H is Tukey quotient of G}
    Let $G$ be a topological group and $H$ a subgroup of $G$.
    Then $H\le_T G$.
\end{lemma}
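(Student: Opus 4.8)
The plan is to exhibit a concrete cofinal map from a local base of the identity in $G$ onto a local base of the identity in $H$, and then to invoke the characterization of Tukey reducibility via cofinal maps (Lemma \ref{l:cofinalmap} together with the preceding lemma). First I would fix a local base $\mathcal{N}_G$ of the identity $e$ in $G$, noting that $e\in H$ since $H$ is a subgroup, so $e$ is also the identity of $H$. Because $H$ carries the subspace topology, the collection $\mathcal{N}_H=\set{N\cap H:N\in\mathcal{N}_G}$ is a local base of $e$ in $H$; this is the one place where I would check the definitions with care, verifying both that each $N\cap H$ is open in $H$ and that every $H$-neighborhood of $e$ contains some member of $\mathcal{N}_H$.

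With these two bases in hand, I would define $g:\mathcal{N}_G\to\mathcal{N}_H$ by $g(N)=N\cap H$ and show that $g$ is a cofinal map by means of Lemma \ref{l:cofinalmap}: it suffices to prove that $g[A]$ is cofinal in $(\mathcal{N}_H,\supseteq)$ whenever $A$ is cofinal in $(\mathcal{N}_G,\supseteq)$. So suppose $A\sst\mathcal{N}_G$ is cofinal, and take an arbitrary $V\in\mathcal{N}_H$, say $V=N_0\cap H$ with $N_0\in\mathcal{N}_G$. Since $A$ is cofinal in the reverse-inclusion order, there is $N\in A$ with $N\sst N_0$, whence $g(N)=N\cap H\sst N_0\cap H=V$, i.e. $V\supseteq g(N)$. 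This says exactly that $g(N)$ lies above $V$ in $(\mathcal{N}_H,\supseteq)$, so $g[A]$ is cofinal in $\mathcal{N}_H$.

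Having shown that $g$ is cofinal, the lemma relating cofinal maps to Tukey reducibility yields $\mathcal{N}_H\le_T\mathcal{N}_G$, which by definition is precisely $H\le_T G$. As an alternative I could instead build a Tukey map $f:\mathcal{N}_H\to\mathcal{N}_G$ directly, choosing for each $V\in\mathcal{N}_H$ some $f(V)\in\mathcal{N}_G$ with $f(V)\cap H\sst V$ and checking that any upper bound of $f[X]$ in $\mathcal{N}_G$ intersected with $H$ bounds $X$; this is essentially the same argument read through the unbounded-set formulation.

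I expect the only genuinely delicate point to be the bookkeeping around the reverse-inclusion ordering $\supseteq$: being \emph{cofinal} in these posets means \emph{containing arbitrarily small neighborhoods}, so one must consistently translate inclusions of neighborhoods into the order relation without flipping a direction. Everything else — that $\mathcal{N}_H$ is a local base and that intersecting with $H$ preserves inclusions — is routine.
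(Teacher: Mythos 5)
Your proposal is correct and takes essentially the same route as the paper: both construct the local base $\mathcal{N}_H=\set{B\cap H: B\in\mathcal{N}_G}$ of $e$ in $H$ and show that the map $B\mapsto B\cap H$ from $\mathcal{N}_G$ to $\mathcal{N}_H$ is cofinal. The only cosmetic difference is that you verify cofinality through Lemma \ref{l:cofinalmap} (images of cofinal sets are cofinal), while the paper checks the threshold form of the definition directly; the two verifications are interchangeable and equally short.
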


\begin{proof}
	Since $H \le G$, then $e_{H} = e_{G}$.
	Denote the identity of both $H$ and $G$ with $e$.
	Let $\mathcal{N}_G$ be a local base of $e$ in $G$.
	Let us define $\mathcal{N}_H=\set{B\cap H: B\in \mathcal{N}_G}$.
	Note that $\mathcal{N}_H$ is a local base of $e$ in $H$.
	Define a map $f:\mathcal{N}_G\to \mathcal{N}_H$ as follows: for $B\in \mathcal{N}_G$ let $f(B)=B\cap H$.
	We will prove that $f$ is a cofinal map.
	Let $U\in \mathcal{N}_H$.
	Since it is open in $H$, it has the form $U=H\cap V$ for an open set $V\sst G$.
	Since $U\sst V$ we have $e\in V$.
	Hence, there is a $B\in \mathcal{N}_G$ such that $B\sst V$.
	Then $f(B)=B\cap H\sst V\cap H=U$, and for all $B'\in \mathcal{N}_G$ such that $B'\sst B$ we have $f(B')=B'\cap H\sst B\cap H\sst U$.
	Thus $f$ is a cofinal map, so $H\le_T G$.
\end{proof}

\begin{lemma}
    Let $G$ be a topological group and $H\le G$ an open subgroup of $G$.
    Then $G\equiv_T H$.
\end{lemma}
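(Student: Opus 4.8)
The plan is to establish the two Tukey inequalities separately. One direction, $H\le_T G$, is immediate from Lemma \ref{H is Tukey quotient of G}, since an open subgroup is in particular a subgroup. So all the content lies in the reverse inequality $G\le_T H$, and in fact I would prove something slightly stronger: a single local base of $e$ simultaneously witnesses the cofinal type of both groups, which yields $G\equiv_T H$ in one stroke.

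The key observation is that, precisely because $H$ is \emph{open} in $G$, a local base of the identity in $H$ is already a local base of the identity in $G$. Fix a local base $\mathcal{N}_H$ of $e$ in $H$ (recall $e_H=e_G=e$, as $H$ is a subgroup). First I would check that every $B\in\mathcal{N}_H$ is open in $G$: such a $B$ is open in the subspace $H$, so $B=H\cap V$ for some $G$-open $V$, and since $H$ itself is $G$-open, $B$ is $G$-open as an intersection of two $G$-open sets. Conversely, given any $G$-open neighborhood $U$ of $e$, the set $U\cap H$ is an $H$-open neighborhood of $e$, so there is $B\in\mathcal{N}_H$ with $e\in B\subseteq U\cap H\subseteq U$. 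These two facts together say exactly that $\mathcal{N}_H$ is a local base of $e$ in $G$.

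With this in hand the conclusion follows immediately: $\mathcal{N}_H$ is at once a local base of $e$ in $H$ and in $G$, so by Lemma \ref{well_defined} (comparing $\mathcal{N}_H$ with any other local base of $e$ in $G$) the cofinal type of $G$ equals that of $\mathcal{N}_H$, which is by definition the cofinal type of $H$; hence $G\equiv_T H$. I do not expect a genuine obstacle here. The only step requiring care is the verification that $H$-open sets are $G$-open, which rests squarely on the hypothesis that $H$ is open rather than merely a subgroup; were $H$ only a subgroup one would in general have a strict drop in cofinal type. Everything else is routine bookkeeping about neighborhood bases.
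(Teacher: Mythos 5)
Your proof is correct and takes essentially the same approach as the paper: both arguments exhibit a single family that is simultaneously a local base of $e$ in $H$ and in $G$ (exploiting that $H$ is open), and then conclude $G\equiv_T H$ from the well-definedness of the cofinal type. The only cosmetic difference is the direction of the construction --- the paper intersects a $G$-base with $H$ to produce the common base, while you observe that an $H$-base is already a $G$-base.
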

	
\begin{proof}
    Let $e$ be the identity of groups $G$ and $H$.
    Let $\mathcal{N}_{G}$ be a local base of $e$ in $G$.
    Then, since $H$ is open, $\mathcal{N}_{H} = \{B \cap H : B \in \mathcal{N}_{G}\}$ is a local base of $e$ both in $H$ and in $G$.
    Thus $H\equiv_T G$.
\end{proof}

\begin{lemma}\label{l:tukeyhom}
    Let $G$ and $H$ be topological groups, and let $\varphi:G\to H$ be an open and continuous homomorphism.
    Then $H\le_T G$.  
\end{lemma}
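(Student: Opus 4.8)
The plan is to produce an explicit cofinal map witnessing $\mathcal{N}_H \le_T \mathcal{N}_G$, following the strategy of Lemma~\ref{H is Tukey quotient of G}. First I would fix a local base $\mathcal{N}_G$ of $e_G$ in $G$ and consider the family of direct images $\mathcal{N}_H = \set{\varphi[B] : B \in \mathcal{N}_G}$, showing that it is a local base of $e_H$ in $H$. All three hypotheses on $\varphi$ enter here. Since $\varphi$ is a homomorphism, $\varphi(e_G) = e_H$, so $e_H \in \varphi[B]$ for every $B \in \mathcal{N}_G$; since $\varphi$ is open, each $\varphi[B]$ is open in $H$; and since $\varphi$ is continuous, for any open $U \ni e_H$ the set $\varphi^{-1}[U]$ is an open neighborhood of $e_G$, so it contains some $B \in \mathcal{N}_G$, and then $\varphi[B] \sst \varphi[\varphi^{-1}[U]] \sst U$. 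Together these show that $\mathcal{N}_H$ is indeed a local base of $e_H$.

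Next I would define $f \colon \mathcal{N}_G \to \mathcal{N}_H$ by $f(B) = \varphi[B]$ and check that it is a cofinal map, where both bases are ordered by $\supseteq$. Given $U \in \mathcal{N}_H$, continuity gives a $B_0 \in \mathcal{N}_G$ with $B_0 \sst \varphi^{-1}[U]$; then for every $B \in \mathcal{N}_G$ with $B \sst B_0$ we have $f(B) = \varphi[B] \sst \varphi[B_0] \sst U$, which is exactly the cofinality condition. By the lemma characterising Tukey reducibility through cofinal maps, the existence of a cofinal map $\mathcal{N}_G \to \mathcal{N}_H$ yields $\mathcal{N}_H \le_T \mathcal{N}_G$, that is, $H \le_T G$.

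The one point requiring care, and the crux of the argument, is the verification that $\mathcal{N}_H$ is a genuine local base, since this is where openness of $\varphi$ is indispensable: without it the sets $\varphi[B]$ need not be open and the whole construction collapses. Note that surjectivity of $\varphi$ is not needed, because the local-base argument only requires that preimages of neighborhoods of $e_H$ be neighborhoods of $e_G$. Once $\mathcal{N}_H$ is known to be a local base, the cofinality of $f$ is a routine reprise of the computation in Lemma~\ref{H is Tukey quotient of G}.
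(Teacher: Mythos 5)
Your proposal is correct and follows essentially the same route as the paper: both form the image family $\set{\varphi[B]:B\in\mathcal{N}_G}$, verify it is a local base at $e_H$ using openness and continuity of $\varphi$, and then show $B\mapsto\varphi[B]$ is a cofinal map from $\mathcal{N}_G$ to this base. The only cosmetic difference is that you check the cofinality condition directly from the definition, whereas the paper routes it through Lemma \ref{l:cofinalmap} (images of cofinal sets are cofinal); these verifications are interchangeable.
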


\begin{proof}
	 Let $e_{G}$ be the identity of $G$ and $e_{H}$ the identity of $H$.
	 Mapping $\varphi$ is a homomorphism, so $\varphi(e_{G}) = e_{H}$.
	 Let $\mathcal{N}_{G}$ be a local base at $e_{G}$. 
	 First, we claim that $\mathcal{N}_{H} = \{\varphi[U] : U \in \mathcal{N}_{G}\}$ is a local base at $e_{H}$. 
	 Clearly $\varphi[U]$ is an open neighborhood of $e_{H}$ since $\varphi$ is open. 
	 Let us fix an open neighborhood $V$ of $e_{H}$.
	 Since $\varphi$ is continuous at $e_G$, there is an open $U\in \mathcal{N}_G$ such that $\varphi[U]\sst V$.
	 Thus, $\mathcal{N}_H$ is a local base of $e_H$ in $H$.
	 
	 Next, we prove that $H \le_T G$. 
	 Define a mapping $g:\mathcal{N}_{G}\to \mathcal{N}_{H}$ by $g(U) = \varphi[U]$.
	 Let $\mathcal{K}$ be a cofinal subset of $\mathcal{N}_{G}$ and let $V$ be an open neighborhood in $\mathcal{N}_H$.
	 Then $V=\varphi[U]$ for some $U\in \mathcal{N}_G$.
	 Since $\mathcal K$ is cofinal in $\mathcal{N}_G$, there is a $W\in\mathcal K$ such that $W\sst U$.
	 Then $g(W)=\varphi[W]\in \mathcal{N}_H$ and $g(W)=\varphi[W]\sst V$.
	 Hence $g[\mathcal K]$ is a cofinal subset of $\mathcal{N}_H$, which means, by Lemma \ref{l:cofinalmap} that $g$ is a cofinal map and so $H\le_T G$.
\end{proof}

\begin{example}
	Note that the conclusion of Lemma \ref{l:tukeyhom} does not hold if one considers homomorphism which are only continuous and not open.
	In other words, continuous homomorphisms do not preserve cofinal types of topological groups.
	To see this, consider the symmetric group on $\o_1$, let us denote it by $S_{\o_1}$. Let $\mathcal O_T$ be the product topology on $S_{\o_1}$ and let $\mathcal O_{\o_1}$ be the $\o_1$-box topology (see Definition \ref{d:kbox} and the theorem just after it) on $S_{\o_1}$.
	Then $(S_{\o_1},\mathcal O_T)\equiv_T [\o_1]^{<\o}$ and $(S_{\o_1},\mathcal O_{\o_1})\equiv_T \o_1$, and clearly $\o_1<_T [\o_1]^{<\o}$.
	On the other hand $\id_{S_{\o_1}}:(S_{\o_1},\mathcal O_{\o_1})\to (S_{\o_1},\mathcal O_T)$ is a continuous homomorphism.
\end{example}

\begin{lemma}
    Let $G$ be a topological group and $N\lhd G$ a closed normal subgroup of $G$.
    Then $G/N\le_T G$.
\end{lemma}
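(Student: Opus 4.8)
The plan is to exhibit $G/N$ as the image of $G$ under an open continuous homomorphism and then apply Lemma \ref{l:tukeyhom} directly. Let $q:G\to G/N$ be the canonical projection $q(g)=gN$, where $G/N$ carries the quotient topology. Since $N\lhd G$ is normal, $q$ is a group homomorphism, and by the very definition of the quotient topology $q$ is continuous. Thus the substantive content reduces to two standard verifications: that $G/N$ is a Hausdorff topological group (so that the statement is meaningful in the sense of this paper), and that $q$ is an open map.

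First I would check that $q$ is open. For open $U\sst G$ one computes $q^{-1}[q[U]]=UN=\bigcup_{n\in N}Un$. Each $Un$ is open, being the image of the open set $U$ under the right translation by $n$, which is a homeomorphism of $G$; hence $UN$ is open. By the defining property of the quotient topology, $q^{-1}[q[U]]$ being open means $q[U]$ is open in $G/N$, so $q$ is open. Here is also where closedness of $N$ is used: since $N$ is closed, the coset $\set{eN}$ has closed preimage $N$ under $q$, so $\set{eN}$ is closed in $G/N$; thus $G/N$ is $T_1$, and as every $T_1$ topological group is Hausdorff, $G/N$ is a Hausdorff topological group (continuity of its multiplication and inversion follows from those of $G$ together with the fact that $q$ is a continuous open homomorphism).

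With these facts established, $q:G\to G/N$ is an open and continuous homomorphism, and Lemma \ref{l:tukeyhom} applied with $\varphi=q$ yields $G/N\le_T G$. I do not expect a genuine obstacle here: the only nonroutine point is the openness of $q$ (via the identity $q^{-1}[q[U]]=UN$), and the only conceptual subtlety is recognizing that closedness of $N$ enters precisely to guarantee that $G/N$ is Hausdorff, hence a topological group in the sense adopted here; the Tukey reduction itself is then immediate from the previously proven lemma.
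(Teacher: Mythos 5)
Your proof is correct and follows essentially the same route as the paper: both reduce the statement to Lemma \ref{l:tukeyhom} applied to the canonical quotient homomorphism $q:G\to G/N$. The only difference is that the paper cites Theorem 1.5.3 of \cite{knjiga} for the facts that $G/N$ is a (Hausdorff) topological group and that $q$ is open and continuous, whereas you verify these standard facts directly (correctly, via $q^{-1}[q[U]]=UN$ and the closedness of $N$ giving the $T_1$ property).
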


\begin{proof}
    If $N$ is a normal closed subgroup of $G$ then $G/N$ is a well-defined topological group (by Theorem 1.5.3 in \cite{knjiga}).
	Quotient mapping (which is natural homomorphism) $\varphi:G\to G/N$ is an open and continuous homomorphism (again by Theorem 1.5.3 in \cite{knjiga}).
	By the previous Lemma, $G/N \le_T G$.
\end{proof}

\section{Tightness and similar properties}

Now we turn to three classes of spaces which are well-known, but we include definitions to make the note self contained as possible.

\begin{definition}\label{Frechet space}
	A topological space $X$ is called \emph{Fr\'{e}chet space} if for every $A \subset X$ and $x \in \overline{A}$ there is a sequence $\set{x_n:n<\o}\sst A$ converging to $x$.
\end{definition}

Note that if $G$ is a topological group, then it is Frech\'{e}t if for every $A\sst G$ with $e\in\overline{A}$ there is a sequence $\set{x_n:n<\o}\sst A$ such that $\lim_{n<\o}x_n=e$.

\begin{definition}\label{Sequential space}
	A topological space $X$ is called \emph{sequential space} if for every $A\sst X$ which is not closed there is a sequence $\set{x_n:n<\o}\sst A$ and a point $x\notin A$ such that $\lim_{n<\o}x_n=x$.
\end{definition}

\begin{definition}
	A topological space $X$ is \emph{countably tight} if for every $A\sst X$ and every $x\in \overline{A}$ there is a countable set $C\sst A$ such that $x\in\overline{C}$. 
\end{definition}

Note that if $G$ is a topological group, than it is countably tight if for every $A\sst G$ such that $e\in\overline{A}$, there is a countable set $C\sst A$ such that $e\in \overline{C}$. Let us also recall the well known implications for a topological space $X$:
\begin{center}
	$X$ is first-countable $\Rightarrow$ $X$ is Frech\'{e}t $\Rightarrow$ $X$ is sequential $\Rightarrow$ $X$ is countably tight.
\end{center}

Since in this paper we are interested in cofinal types of topological groups, we would like to point out here that neither of mentioned three properties are preserved under Tukey reducibility, as the following example shows. 

\begin{example}
	There are topological groups $G$ and $H$ such that $G\equiv_T H$, $G$ is Frech\'{e}t, and $H$ is not countably tight.
	Let $G$ be the group $$G=\set{x\in 2^{\o_1}:\abs{\set{\a<\o_1:x(\a)\neq 0}}<\o}$$ with the product topology (as in Example 5.10 in \cite{feng}).
	Note that the operation in this group is coordinatewise addition modulo $2$.
	This group is Frech\'{e}t and $G\equiv_T [\o_1]^{<\o}$.
	Let $H$ be the group $H=2^{\o_1}$ with the product topology and the same operation of coordinatewise addition.
	Then, clearly $G\equiv_T [\o_1]^{<\o}\equiv_T H$.
	However, $H$ is not countably tight as shown in Example 1.6.13 on page 51 in \cite{knjiga}.
\end{example}

However, there are properties of topological groups that are preserved under the Tukey reducibility.
The most basic example is metrizability.
Since $G$ being metrizable is equivalent to $G\le_T \o$, and since $\le_T$ is a transitive relation, it follows that if $G$ is metrizable and $H\le_T G$ then $H$ is metrizable as well.
The next proposition gives another simple example.
Recall that for a regular uncountable cardinal $\k$, a topological group $G$ is \emph{P$_{\k}$-group} if the intersection of less than $\k$ many neighborhoods of the identity $e$ is again a neighborhood of $e$. In this notation, a P-group is simply a P$_{\o_1}$-group.

\begin{proposition}
	Let $\k$ be a regular uncountable cardinal, $G$ a P$_{\k}$-group and $H$ a topological group such that $H \leq_T G$. Then, $H$ is a P$_{\k}$-group.
\end{proposition}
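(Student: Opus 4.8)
The plan is to translate the P$_{\kappa}$ property into a purely order-theoretic condition on a local base of the identity, and then to show that this condition descends along Tukey reductions.

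First I would record the following equivalence, which is the conceptual heart of the matter: for a topological group $G$ with local base $(\mathcal{N},\supseteq)$ of the identity $e$, the group $G$ is a P$_{\kappa}$-group if and only if $(\mathcal{N},\supseteq)$ is $\kappa$-directed. The forward direction is a direct unwinding of definitions: given fewer than $\kappa$ members $B_{\xi}$ of $\mathcal{N}$, the P$_{\kappa}$ property guarantees that $\bigcap_{\xi}B_{\xi}$ is again a neighborhood of $e$, hence contains some $C\in\mathcal{N}$; since in the ordering $\supseteq$ the relation $B_{\xi}\supseteq C$ is exactly $B_{\xi}\le C$, the element $C$ is a common upper bound of $\{B_{\xi}\}$. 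For the converse, given fewer than $\kappa$ arbitrary neighborhoods $V_{\xi}$ of $e$, I would shrink each to a basic $B_{\xi}\in\mathcal{N}$ with $B_{\xi}\subseteq V_{\xi}$, take a common lower set $C\subseteq\bigcap_{\xi}B_{\xi}$ supplied by $\kappa$-directedness, and note $C\subseteq\bigcap_{\xi}V_{\xi}$, so that the intersection is a neighborhood. By Lemma \ref{well_defined} all local bases are cofinally similar, so this characterization does not depend on the particular $\mathcal{N}$ chosen.

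The second and main step is the preservation statement: if $E$ is $\kappa$-directed and $D\le_T E$, then $D$ is $\kappa$-directed. Here I would invoke the characterization of $\le_T$ by cofinal maps and fix a cofinal map $g:E\to D$. Given $\{x_{\xi}:\xi<\alpha\}\subseteq D$ with $\alpha<\kappa$, cofinality of $g$ yields, for each $\xi$, some $y_{\xi}\in E$ such that $x_{\xi}\le_D g(z)$ whenever $y_{\xi}\le_E z$. Since $E$ is $\kappa$-directed and $\alpha<\kappa$, the family $\{y_{\xi}:\xi<\alpha\}$ admits an upper bound $y\in E$; then $x_{\xi}\le_D g(y)$ for every $\xi$, so $g(y)$ is the required upper bound in $D$. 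I expect this to be where the genuine content sits: it is the only place where the bound $\alpha<\kappa$ interacts with the defining property of cofinal maps, and the crux is getting the quantifier order right, that is, selecting all the witnesses $y_{\xi}$ first and only then amalgamating them inside $E$.

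Finally I would assemble the pieces. From $H\le_T G$ I obtain local bases $\mathcal{N}_H$ of $e_H$ and $\mathcal{N}_G$ of $e_G$ with $\mathcal{N}_H\le_T\mathcal{N}_G$. Since $G$ is a P$_{\kappa}$-group, the first step makes $\mathcal{N}_G$ $\kappa$-directed; the preservation step then makes $\mathcal{N}_H$ $\kappa$-directed; and the converse direction of the first step yields that $H$ is a P$_{\kappa}$-group. The only subtlety is that the characterization must be applied to the very bases that witness the reduction, which is legitimate precisely because both P$_{\kappa}$-ness and $\kappa$-directedness are independent of the choice of local base.
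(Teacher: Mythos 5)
Your proof is correct, and its core is the same descent of $\kappa$-directedness along the Tukey reduction that drives the paper's proof; indeed, the paper remarks immediately after its proof that the proposition is just a translation of your second lemma. Still, your route is organized differently, in two respects. First, you factor the argument into two reusable order-theoretic facts --- (i) $G$ is a P$_{\kappa}$-group iff some (equivalently, every) local base of $e$ is $\kappa$-directed under $\supseteq$, and (ii) if $E$ is $\kappa$-directed and $D\le_T E$ then $D$ is $\kappa$-directed --- whereas the paper runs the whole argument in one piece, directly on neighborhoods. Second, you prove (ii) via the dual characterization of Tukey reducibility: you fix a cofinal map $g:\mathcal{N}_G\to\mathcal{N}_H$, choose for each $x_\xi$ a witness $y_\xi$, amalgamate the $y_\xi$ by $\kappa$-directedness of $\mathcal{N}_G$ into a single $y$, and exhibit $g(y)$ as an explicit upper bound; the paper instead takes a Tukey map $\varphi:\mathcal{N}_H\to\mathcal{N}_G$ and argues by contradiction (a family with no common basic refinement is unbounded in $\mathcal{N}_H$, so its $\varphi$-image is unbounded in $\mathcal{N}_G$, contradicting P$_{\kappa}$-ness of $G$). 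These are two sides of the same coin --- the paper's unlabeled lemma that $D\le_T E$ iff there is a cofinal map from $E$ to $D$ is exactly what licenses your version --- so neither is more general; but yours is constructive, isolates statements useful elsewhere, and, by staying inside the bases throughout, sidesteps a small imprecision in the paper's write-up, where $\varphi$ is applied to the arbitrary neighborhoods $U_\xi$ rather than to the shrunken basic sets $V_\xi\in\mathcal{N}_H$ on which it is actually defined. The paper's version, in exchange, is shorter and needs nothing beyond the definition of a Tukey map.
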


\begin{proof}
	Let $\mathcal{N}_{G}$ be a local base of $e_{G}$ in $G$ and $\mathcal{N}_{H}$ a local base of $e_{H}$ in $H$. 
	Since $H\le_T G$, there is a Tukey map $\varphi:\mathcal{N}_{H} \to \mathcal{N}_{G}$.
	
	Let $\a<\k$ and take any set $\set{U_{\xi}:\xi<\a}$ of $\a$ many neighborhoods of $e_H$.
	Since $\mathcal N_H$ is a local base of $e_H$, for each $\xi<\a$ there is $V_{\xi}\in \mathcal N_H$ such that $e\in V_{\xi}\sst U_{\xi}$.
	Suppose now that the proposition fails, i.e. that there is no $U\in \mathcal N_H$ such that $U\sst U_{\xi}$ for all $\xi<\a$.
	This means that $\set{U_{\xi}:\xi<\a}$ is unbounded in $(\mathcal N_H,\supseteq)$.
	Since $\varphi$ is a Tukey map, the set $\set{\varphi(U_{\xi}):\xi<\a}\sst \mathcal N_G$ is unbounded in $\mathcal N_G$.
	However, this is a contradiction with the assumption that $G$ is a P$_{\k}$-group, since that assumption guarranties there is $W\in \mathcal N_G$ such that $W\sst \varphi(U_{\xi})$ for all $\xi<\a$.
\end{proof}

Note that the previous proposition is actually a translation of the fact that, for a regular infinite cardinal $\k$, if $E$ is a $\k$-directed set and $D\le_T E$, then $D$ is $\k$-directed as well.
Basically the same proof shows both facts.

Another useful notion when analyzing cofinal types of topological groups is the following.

\begin{definition}\label{d:ideal}
	For a topological group $G$ with the identity $e$, we define $$\mathcal I_G=\set{A\sst G:e\notin \overline A}.$$
\end{definition}

\begin{remark}
	In the notation of the previous definition, $\mathcal I_G$ is an ideal of subsets of $G$, hence it is directed by $\subseteq$ relation.
	It is also clear that if $\mathcal{N}_G$ is a local base of $e$ then $(\mathcal{N}_G,\supseteq)\equiv_T (\mathcal I_G,\sst)$.
\end{remark}

We now move to higher analogues of countable tightness.

\begin{definition}\label{tihtness}
	For a topological space $X$, the \emph{tightness} of $X$ is the minimal cardinal $\k\geq\o$ with the property that for every set $A\subset X$ and every point $x\in\overline{A}$, there is $C\subset A$ such that $|C|\leq\k$ and $x\in\overline{C}$.
	
	The tightness of a space $X$ is denoted by $t(X)$.
\end{definition}

Let us connect the tightness of $G$ with its metrizability.

\begin{theorem}\label{t:tightness}
	Let $\k,\l$ be regular infinite cardinals and $G$ a topological group with $t(G)=\k$ and such that $G\le_T \l\times\k^+$.
	Then $G\le_T\l$.
\end{theorem}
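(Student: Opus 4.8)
The plan is to argue entirely in terms of the ideal $\I_G=\set{A\sst G:e\notin\overline A}$, which by the remark following Definition \ref{d:ideal} satisfies $(\mathcal N_G,\supseteq)\equiv_T(\I_G,\sst)$; thus the hypothesis $G\le_T\l\times\k^+$ yields a Tukey map $f:\I_G\to\l\times\k^+$, and it suffices to produce a Tukey map $\I_G\to\l$. Writing $f=\br{f_0,f_1}$ with $f_0:\I_G\to\l$ and $f_1:\I_G\to\k^+$, I claim that the first coordinate $f_0$ is itself the desired Tukey map, so the whole argument is to verify that $f_0$ sends unbounded sets to unbounded sets.

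The first key step is to record the combinatorial meaning of unboundedness in $\I_G$. Since $\I_G$ is downward closed and directed by $\sst$, a set $X\sst\I_G$ is bounded exactly when $\bigcup X\in\I_G$; hence $X$ is unbounded if and only if $e\in\overline{\bigcup X}$. This is precisely where tightness enters: if $e\in\overline{\bigcup X}$, then $t(G)=\k$ provides a set $C\sst\bigcup X$ with $\abs C\le\k$ and $e\in\overline C$. Choosing for each $c\in C$ a member $B_c\in X$ containing $c$ and setting $X'=\set{B_c:c\in C}$, we obtain $\abs{X'}\le\k$ with $e\in\overline C\sst\overline{\bigcup X'}$, so $X'$ is an unbounded subset of $X$. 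In short, \emph{every unbounded subset of $\I_G$ has an unbounded subset of size at most $\k$}; this is the translation of $t(G)\le\k$ that drives everything.

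The second step exploits the regularity of $\k^+$ to neutralize the second coordinate of $f$. If $X'\sst\I_G$ is unbounded with $\abs{X'}\le\k$, then $f_1[X']$ is a subset of $\k^+$ of cardinality at most $\k<\k^+$, hence bounded in the regular cardinal $\k^+$. Since $f$ is a Tukey map, $f[X']$ is unbounded in $\l\times\k^+$; but a subset of a product of two regular cardinals is unbounded precisely when one of its projections is unbounded, and here the $\k^+$-projection $f_1[X']$ is bounded, so the $\l$-projection $f_0[X']$ must be unbounded in $\l$.

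Finally these two steps combine: given an arbitrary unbounded $X\sst\I_G$, pass to an unbounded $X'\sst X$ with $\abs{X'}\le\k$ by the first step, conclude that $f_0[X']$ is unbounded in $\l$ by the second, and note $f_0[X']\sst f_0[X]$, so $f_0[X]$ is unbounded. Thus $f_0$ carries unbounded sets to unbounded sets, i.e. $f_0$ witnesses $\I_G\le_T\l$, whence $G\le_T\l$. The step I expect to require the most care is the first one — isolating the correct consequence of tightness, namely that unbounded families can be shrunk to size $\k$ while staying unbounded — since once that is in hand the remainder is a short counting argument using $\k<\k^+$ together with the regularity of $\l$ and $\k^+$.
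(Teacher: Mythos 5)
Your proof is correct, and it rests on the same two ingredients as the paper's: tightness is used to extract unbounded witnesses of size at most $\k$, and regularity of $\k^+$ is used to bound them. But the organization is genuinely different --- you verify directly that the projection $f_0$ preserves unboundedness, while the paper works on the dual, cofinal side. Concretely, the paper sets $A_\a^\eta=\set{a\in\I_G : f(a)<(\a,\eta)}$ (bounded, being the preimage under $f$ of a bounded set), puts $B_\a=\bigcup\bigl(\bigcup_{\eta<\k^+}A_\a^\eta\bigr)$, proves $B_\a\in\I_G$ by a contradiction argument combining tightness with the bound $\sup_{\b<\k}\g_\b<\k^+$, and then checks that $\set{B_\a:\a<\l}$ is a cofinal subset of $\I_G$. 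Unwinding definitions, $\bigcup_{\eta<\k^+}A_\a^\eta=f_0^{-1}\bigl[\set{\b:\b<\a}\bigr]$, so the paper's claim that $B_\a\in\I_G$ is exactly the statement that preimages under $f_0$ of bounded subsets of $\l$ are bounded in $\I_G$ --- the preimage formulation of ``$f_0$ is Tukey,'' which is the same fact you prove in the image formulation. So the mathematical core is identical and the difference is packaging; still, your packaging buys something. The lemma that every unbounded subset of $\I_G$ contains an unbounded subset of cardinality at most $\k$ is a clean, reusable translation of $t(G)\le\k$, and combined with the purely order-theoretic observation about products it proves the more general statement that \emph{any} directed set with this smallness property that is Tukey reducible to $\l\times\k^+$ is Tukey reducible to $\l$. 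What the paper's construction buys in exchange is the explicit monotone cofinal family $\set{B_\a:\a<\l}$ witnessing the reduction, rather than just the existence of a Tukey map. (A side remark: neither argument actually uses regularity of $\l$; only regularity of $\k^+$, which is automatic, is needed.)
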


\begin{proof}
	Let $e$ be the identity of the topological group $G$.
	Recall that $G\equiv_T \mathcal I_G$, where $\mathcal I_G$ is introduced in Definition \ref{d:ideal}.

	Let $f: \mathcal I_G \to \lambda \times \k^+$ be a Tukey map.
	For each $(\a, \eta) \in \lambda \times \k^+$, let
	$$
	A_{\a}^{\eta} = \{a \in \I_G : f(a) < (\a, \eta)\}.
	$$
	Note that $A_{\a}^{\eta}$ is bounded subset of $\I_G$ because $f$ is Tukey and it is an inverse image under $f$ of the bounded set $\{(\b,\xi) \in \lambda \times \k^+: (\b,\xi) < (\a,\eta)\}$. 
	Note also that for fixed $\a$: if $\xi < \eta < \k^+$, then $A_{\a}^{\xi} \sst A_{\a}^{\eta}$.
	
	Now, for each $\a < \lambda$, let us define
	$$
	B_{\a} = \bigcup \left(\bigcup_{\eta < \k^+}A_{\a}^{\eta} \right).
	$$
	Clearly, each $B_{\a}$ is a subset of $G$.
	
	Suppose for a moment that there is $\a < \lambda$ such that $B_{\a} \not \in \I_G$.
	This means that $e \in \overline{B_{\a}}$.
	Since $t(G) = \k$, there is a set $X \sst B_{\a}, |X| \le \k$ and $e \in \overline{X}$.
	Let us enumerate $X = \{x_{\b} : \b < \k\}$.
	Since $B_{\a} = \bigcup \left(\bigcup_{\eta < \k^+}A_{\a}^{\eta} \right)$, for each $\b < \k$ there is $a_{\b} \in \bigcup_{\eta < \k^+}A_{\a}^{\eta}$ such that $x_{\b} \in a_{\b}$.
	Hence, for each $\b < \k$, there is some $\g_{\b} < \k^+$ such that $x_{\b} \in a_{\b} \in A_{\a}^{\g_{\b}}$.
	Let $\g = \sup\{\g_{\b}: \b < \k\}<\k^+$.
	Clearly, for each $\b < \k$, $A_{\a}^{\g_{\b}} \sst A_{\a}^{\g}$.
	Hence, $a_{\b} \in A_{\a}^{\g}$ for each $\b < \k$.
	Since $A_{\a}^{\g}$ is bounded, there is a set $B \in \I_G$ such that $a_{\b} \sst B$ for each $\b < \k$.
	In particular, we have that $X \sst B$.
	Since $e \in \overline{X}$, this implies that $e \in \overline{B}$, which contradicts the assumption that $B \in \I_G$.
	Thus, $B_{\a} \in \I_G$ for each $\a < \lambda$.
	
	Now we will prove that $\{B_{\a} : \a < \lambda\}$ is cofinal in $\I_G$.
	This will show that $\I_G \leq_{T} \lambda$ and will finish the proof.
	Let $S \in \I_G$. 
	Then $f(S) = (\a, \xi)$ for some $(\a,\xi) \in \lambda \times \k^+$.
	This means that 
	$$
	S \in A_{\a+1}^{\xi+1} \sst \bigcup_{\eta < \k^+}A_{\a+1}^{\eta}.
	$$
	Finally, this implies that $S \sst \bigcup\left(\bigcup_{\eta < \k^+}A_{\a+1}^{\eta} \right) = B_{\a+1}\in \mathcal I_G$, which proves that $\{B_{\a}: \a < \lambda\}$ is cofinal in $\I_G$.
\end{proof}

Note also the following immediate corollaries.

\begin{corollary}
	Let $\k$ be an infinite regular cardinal and $G$ a topological group with $t(G)=\k$ and such that $G\le_T \o\times\k^+$.
	Then $G$ is metrizable.
\end{corollary}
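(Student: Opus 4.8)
The plan is to apply Theorem \ref{t:tightness} directly, specializing the parameter $\l$ to $\o$. First I would observe that $\o$ is itself a regular infinite cardinal, so it is an admissible value of $\l$ in the hypotheses of Theorem \ref{t:tightness}. With this choice, the assumptions given in the corollary—namely that $G$ is a topological group with $t(G)=\k$ and $G\le_T \o\times\k^+$—are precisely the hypotheses $t(G)=\k$ and $G\le_T \l\times\k^+$ of the theorem with $\l=\o$. The conclusion of the theorem then delivers $G\le_T\o$.

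The remaining step is to convert the Tukey inequality $G\le_T\o$ into the topological statement that $G$ is metrizable. For this I would invoke the Birkhoff-Kakutani theorem, recalled in the introduction, which asserts that a topological group is metrizable if and only if $G\le_T\o$. Applying this equivalence to the inequality produced in the first step yields that $G$ is metrizable, completing the argument.

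I do not expect any genuine obstacle here: the statement is a direct specialization of Theorem \ref{t:tightness} combined with the standard characterization of metrizability for topological groups. The only point that warrants a moment's care is confirming that $\o$ legitimately plays the role of $\l$, i.e.\ that it is a regular infinite cardinal, which is immediate. Thus the corollary is genuinely a one-line consequence of the theorem and the Birkhoff-Kakutani characterization.
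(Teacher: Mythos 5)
Your proof is correct and matches the paper's intended argument exactly: the paper presents this as an ``immediate corollary'' of Theorem \ref{t:tightness}, obtained by setting $\l=\o$ and then invoking the Birkhoff--Kakutani characterization of metrizability as $G\le_T\o$. Nothing further is needed.
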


\begin{corollary}\label{cor:metrtight}
	If $G$ is a countably tight topological group and $G\le_T \o\times\o_1$, then $G$ is metrizable.
	In particular if a countably tight group $G$ is Tukey reducible to $\o_1$ then it is metrizable.
\end{corollary}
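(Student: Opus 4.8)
The plan is to read this off Theorem \ref{t:tightness} by choosing the cardinal parameters appropriately, and then to invoke Birkhoff--Kakutani. Since $G$ is countably tight, its tightness is $t(G)=\o$, so I would set $\k=\o$, which gives $\k^+=\o_1$. Taking $\l=\o$ as well, the hypothesis $G\le_T \o\times\o_1$ is exactly the statement $G\le_T \l\times\k^+$. Theorem \ref{t:tightness} then applies directly and yields $G\le_T \l=\o$. By the Birkhoff--Kakutani theorem (recalled in the introduction, in the form $G$ is metrizable iff $G\le_T\o$), this means precisely that $G$ is metrizable. This handles the first assertion without any further work, so essentially all the content has already been carried out in the proof of Theorem \ref{t:tightness}.

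For the second (\emph{in particular}) assertion, I would reduce it to the first one. Here the only thing to check is that $G\le_T\o_1$ implies $G\le_T \o\times\o_1$. This follows because $\o_1\le_T \o\times\o_1$: the map $\eta\mapsto(0,\eta)$ from $\o_1$ into $\o\times\o_1$ is a Tukey map, since a subset of $\o_1$ is unbounded exactly when its image under this embedding is unbounded in the product (cofinality in the product requires cofinality in each coordinate, and in particular in the $\o_1$ coordinate). Composing this with the given reduction $G\le_T\o_1$ and using transitivity of $\le_T$ gives $G\le_T \o\times\o_1$, at which point the first part of the corollary applies and shows $G$ is metrizable.

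I do not expect any genuine obstacle here, as both parts are immediate consequences of Theorem \ref{t:tightness}, the Birkhoff--Kakutani characterization of metrizability, and transitivity of Tukey reducibility. The only point requiring a sentence of justification is the elementary fact that $\o_1$ is Tukey reducible to its product with $\o$, which amounts to observing that a single coordinate of a finite product of directed sets is always Tukey below the product.
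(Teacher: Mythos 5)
Your proposal is correct and matches the paper's intended argument: the paper states this as an immediate corollary of Theorem \ref{t:tightness} (with $\k=\l=\o$, so $\k^+=\o_1$) combined with the Birkhoff--Kakutani characterization $G$ metrizable $\akko$ $G\le_T\o$, and the second assertion follows, as you say, from $\o_1\le_T\o\times\o_1$ and transitivity of $\le_T$. One tiny wording correction: the justification for the coordinate embedding being Tukey should be phrased in terms of boundedness (a subset of $\o\times\o_1$ is bounded iff both coordinate projections are bounded), not cofinality, though the fact you use is elementary and true.
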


Recall the following theorem from \cite{stevo}.
The statement is adapted to the more uniform treatment of consequences of the Proper Forcing Axiom, as in \cite{dichotomies}.
Note that all the relevant notions are thoroughly explained there (in \cite{dichotomies}).

\begin{theorem}\label{t:stevo}
    The P-ideal dichotomy and $\mathfrak p>\o_1$ imply that every directed set of cofinality at most $\o_1$ is cofinally similar to one of the following: $1$, $\o$, $\o_1$, $\o\times \o_1$, or $[\o_1]^{<\o}$.
\end{theorem}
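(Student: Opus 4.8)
The plan is to analyze a directed set $D$ according to its cofinality and the extent to which it is $\sigma$-directed. If $\cf(D)\le\o$ then Remark \ref{countabledirected} already gives $D\equiv_T 1$ or $D\equiv_T\o$, so I may assume $\cf(D)=\o_1$ and fix a cofinal set $\set{d_{\a}:\a<\o_1}$. Two observations are available before the hypotheses are invoked. First, the map sending a finite set $F$ to any upper bound of $\set{d_{\a}:\a\in F}$ is a cofinal map $[\o_1]^{<\o}\to D$, so $D\le_T[\o_1]^{<\o}$ always holds; thus $[\o_1]^{<\o}$ is the largest possible cofinal type on the list. Second, if $D$ is $\sigma$-directed (every countable subset is bounded), then recursively choosing $e_{\a}$ to bound $\set{d_{\b},e_{\b}:\b<\a}$ produces a strictly increasing cofinal chain of length $\o_1$, whence $D\equiv_T\o_1$.

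The heart of the argument is a dichotomy obtained from the P-ideal dichotomy applied to
$$\I=\set{A\in[\o_1]^{\le\o}:\set{d_{\a}:\a\in A}\ \text{is bounded in }D}.$$
Granting for the moment that $\I$ is a P-ideal, $\pid$ yields one of two cases. In the \emph{orthogonal} case $\o_1=\bigcup_{n<\o}S_n$ where each $S_n$ meets every member of $\I$ in a finite set; since $\o_1$ is uncountable, some $S_n$ is uncountable, and then $\set{d_{\a}:\a\in S_n}$ has size $\o_1$ and every infinite subset of it is unbounded. Reindexing this set as $\set{a_{\xi}:\xi<\o_1}$, the map sending $F$ to a bound of $\set{a_{\xi}:\xi\in F}$ is a Tukey map from $[\o_1]^{<\o}$ into $D$, so $[\o_1]^{<\o}\le_T D$ and hence $D\equiv_T[\o_1]^{<\o}$. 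In the \emph{homogeneous} case there is an uncountable $A\sst\o_1$ with $[A]^{\le\o}\sst\I$, that is, a $\sigma$-directed subset of $D$ of size $\o_1$.

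It remains to convert the homogeneous case, together with the failure of global $\sigma$-directedness, into $D\equiv_T\o\times\o_1$, and this is where $\mathfrak p>\o_1$ enters. The guiding idea is that a non-$\sigma$-directed $D$ admitting no uncountable ``spread'' set as above must have its countable unboundedness concentrated along essentially one direction: using $\mathfrak p>\o_1$ one should locate a single increasing countable sequence $\set{x_n:n<\o}$ such that every countable unbounded subset of $D$ has an unbounded subset almost dominated by $\set{x_n:n<\o}$, and such that the quotient of $D$ by this direction is $\sigma$-directed. Pairing the $\o$-coordinate coming from $\set{x_n:n<\o}$ with the $\o_1$-coordinate coming from the $\sigma$-directed quotient should then produce cofinal maps in both directions between $D$ and $\o\times\o_1$, giving $D\equiv_T\o\times\o_1$; the function of $\mathfrak p>\o_1$ is precisely to supply the uniform pseudo-intersections that make these maps work and to preclude the $(\o_1,\o)$-gaps that would otherwise create cofinal types strictly between $\o\times\o_1$ and $[\o_1]^{<\o}$.

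The main obstacle lies in the hidden hypotheses of the second and third paragraphs. The ideal $\I$ is \emph{not} a P-ideal in general: for $D=\o\times\o_1$ the countable sets lying over the $n$-th level are each bounded, yet no single bounded countable set almost contains all of them. Consequently the genuine work is to preprocess $D$ first---quotienting out one canonical countable unbounded direction with the help of $\mathfrak p>\o_1$---so that $\pid$ can be applied to a truly $\sigma$-directed remainder, and then to verify that the maps assembled from the two cases are honestly Tukey. Controlling the interaction between the single countable ``$\o$-direction'' and the $\sigma$-directed ``$\o_1$-direction'', and showing that no further configuration can arise under $\pid$ and $\mathfrak p>\o_1$, is the crux of the argument.
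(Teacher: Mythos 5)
You should first be aware that the paper contains no proof of Theorem \ref{t:stevo}: it is quoted from \cite{stevo}, in the reformulation of \cite{dichotomies}, so your attempt can only be measured against the argument in those sources. Your preliminary reductions are correct (the case of countable cofinality, the bound $D\le_T[\o_1]^{<\o}$ for any $D$ of cofinality $\le\o_1$, and $\sigma$-directed $\Rightarrow$ $D\equiv_T\o_1$), but the engine of the proof --- applying $\pid$ to the ideal $\I$ of bounded countable index sets --- fails at the very first step, for exactly the reason you concede in your last paragraph: $\I$ is not a P-ideal, and your $\o\times\o_1$ counterexample to P-ness is correct. From that point on the text is a declaration of intent (``one should locate\dots'', ``should then produce\dots''), not a proof, and the suggested repair (quotienting out ``one canonical countable direction'') is not how the difficulty is actually resolved. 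There is also a second, independent defect: even if $\pid$ did apply to your $\I$, its homogeneous alternative only produces \emph{one} uncountable $\sigma$-bounded subset of $D$, which is far too weak to yield $D\equiv_T\o\times\o_1$. For instance, $D=(\o\times\o_1)\times[\o_1]^{<\o}$ is cofinally similar to $[\o_1]^{<\o}$, is not $\sigma$-directed, and contains the uncountable $\sigma$-bounded set $\set{((0,\a),\ps):\a<\o_1}$; so ``homogeneous set exists plus $D$ not $\sigma$-directed'' cannot imply $D\equiv_T\o\times\o_1$.

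The actual argument runs the dichotomy through the \emph{dual} ideal: $\I^*=\set{A\in[\o_1]^{\le\o}: \set{\a\in A: d_\a\le d}\mbox{ is finite for every } d\in D}$, i.e.\ countable sets of indices along which the $d_\a$ ``converge to infinity'' (equivalently, every infinite subset is unbounded). The hypothesis $\mathfrak p>\o_1$ is used exactly twice. First, $\mathfrak p\le\mathfrak b$, so $\mathfrak b>\o_1$, and by Rothberger's theorem there are no $(\o,\o_1)$-gaps; this is precisely what verifies that $\I^*$ \emph{is} a P-ideal: given $A_n\in\I^*$, one must interpolate a set $A$ with $A_n\subseteq^* A$ and $A\cap K_\g$ finite for each of the $\o_1$ many sets $K_\g=\set{\b: d_\b\le d_\g}$, which is an $(\o,\o_1)$-pregap separation problem. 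Then $\pid$ applies: the homogeneous alternative gives an uncountable $H$ every infinite subset of which is unbounded, hence $[\o_1]^{<\o}\le_T D$ and so $D\equiv_T[\o_1]^{<\o}$ by your upper bound; the orthogonal alternative gives $\o_1=\bigcup_n S_n$ with each $S_n$ orthogonal to $\I^*$, and here $\mathfrak p>\o_1$ enters a second time: if some countable $\set{x_k:k<\o}\sst\set{d_\a:\a\in S_n}$ were unbounded, the $\o_1$ many (infinite) sets $\set{k: x_k\not\le d_\g}$ would form a directed family with the finite intersection property, and a pseudo-intersection would be an infinite subset of $S_n$ lying in $\I^*$, contradicting orthogonality. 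Thus each $\set{d_\a:\a\in S_n}$ is $\sigma$-bounded, the monotone map sending $(n,\g)$ to a bound of $\set{d_\b:\b\in S_0\cup\dots\cup S_n,\ \b\le\g}$ is cofinal, whence $D\le_T\o\times\o_1$, and a routine ZFC argument shows that any directed set Tukey below $\o\times\o_1$ is cofinally similar to one of $1$, $\o$, $\o_1$, $\o\times\o_1$. Note that the two cases land on the opposite sides from your plan: the countable decomposition (global information) is what yields $\o\times\o_1$, while the single homogeneous set yields $[\o_1]^{<\o}$; your setup inverted this, which is exactly why your homogeneous case got stuck.
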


Hence, Theorem \ref{t:stevo} and Corollary \ref{cor:metrtight} jointly imply that under P-ideal dichotomy: if $\mathfrak{p}>\o_1$, then every countably tight topological group of weight $\o_1$ is either metrizable or $[\o_1]^{<\o}$ is Tukey reducible to $G$.
This should be compared with the result of Todorcevic from \cite{pid}, that under P-ideal dichotomy, every Frech\'{e}t ($\alpha_1$)-point $x$ in a topological space $X$ is first countable in $X$ unless it has $[\o_1]^{<\o}$ as Tukey quotient.

\section{Metrizability of countably tight groups}

We now provide additional conditions which ensure that a countably tight topological group is metrizable.
First we introduce a couple of notions.

\begin{definition}
	We say that a directed set $(D,\le)$ is \emph{strongly basically generated} if there is a metric $\rho$ on $D$ such that $(D,\rho)$ is a separable metric space and that for every sequence $\set{d_n:n<\o}\subseteq D$ converging to some $d\in D$, there is $d^*\in D$ such that $\rho(d,d^*)\le\sup\set{\rho(d,d_n):n<\o}$ and $d_n\le d^*$ for each $n<\o$.
\end{definition}

For a topological group $G$, we say that $G$ is \emph{strongly basically generated} if $G\le_T D$ for some strongly basically generated directed set $D$.
The next result, Theorem \ref{t:main} and its proof, should be compared to the result (and the proof) of Feng and Dow \cite[Theorem 3.8]{dowfeng} that if $X$ is a compact space with countable tightness and $K(M)$-base for some separable metric space $M$, then $X$ is first-countable.
Here $K(M)$ stands for the collection of all compact subsets of $M$ ordered by inclusion.
What is used in their proof is that $K(M)$ is strongly basically generated.
Before we prove Theorem \ref{t:main}, we give the core of the proof in a lemma about general topological spaces, which may be useful on its own.

\begin{theorem}\label{l:main}
	Suppose that $X$ is regular, locally countably compact, and countably tight topological space.
	Let $x\in X$ be such that its neighborhood filter $\mathcal F_x$ is Tukey reducible to some strongly basically generated directed set $D$.
	Then $x$ has a countable local base in $X$.
\end{theorem}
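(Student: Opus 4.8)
The plan is to turn the statement into a purely ``shrinking neighborhoods'' assertion and then to build the relevant countable family using, in turn, the Tukey reduction (to get a base indexed by $D$), regularity, local countable compactness, and finally strong basic generation together with countable tightness.

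\textbf{Step 1 (a monotone base indexed by $D$).} Since $\mathcal F_x\le_T D$, I fix a Tukey map $\varphi:(\mathcal F_x,\supseteq)\to D$. For each $b\in D$ the set $\set{U\in\mathcal F_x:\varphi(U)\le b}$ is the $\varphi$-preimage of the bounded set $\set{d\in D:d\le b}$, hence (as the preimage of a bounded set under a Tukey map) is bounded in $(\mathcal F_x,\supseteq)$; that is, there is a neighborhood of $x$ contained in every such $U$. Taking interiors of intersections, set $W_b=\operatorname{int}\bigcap\set{U:\varphi(U)\le b}$. Then each $W_b$ is an open neighborhood of $x$, the family is monotone ($b\le b'\Rightarrow W_{b'}\sst W_b$), and since $U\in\set{U':\varphi(U')\le\varphi(U)}$ we get $W_{\varphi(U)}\sst U$, so $\set{W_b:b\in D}$ is a base at $x$. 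Using regularity, for $y\neq x$ choose open $U\ni x$ with $y\notin\overline U$ and $b$ with $W_b\sst U$; then $y\notin\overline{W_b}$, whence $\bigcap_{b\in D}\overline{W_b}=\set{x}$.

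\textbf{Step 2 (reduction via local countable compactness).} By local countable compactness and regularity I fix a closed, countably compact neighborhood $N$ of $x$, and pass to the cofinal (still basic) subfamily of those $b$ with $\overline{W_b}\sst N$. It now suffices to produce a \emph{countable} $B\sst D$ with $\bigcap_{b\in B}\overline{W_b}=\set{x}$. Indeed, enumerating the finite subsets of $B$ yields a decreasing sequence of closed neighborhoods $G_n=\bigcap_{b\in F_n}\overline{W_b}$ with $\bigcap_n G_n=\set{x}$; for any neighborhood $U$ of $x$ the sets $G_n\cap(N\setminus U)$ are decreasing, closed, and have empty intersection, so by countable compactness of $N$ some $G_n\sst U$. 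Thus $\set{G_n:n<\o}$ is a countable base at $x$, which is the conclusion.

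\textbf{Step 3 (building $B$; the crux).} I would build $B$ as an increasing union $B=\bigcup_n B_n$ of countable sets, killing accumulation points one neighborhood at a time. Start from a fixed $\rho$-dense $Q\sst D$. Given $B_n$, look at $Z_n=\bigcap_{b\in B_n}\overline{W_b}$: if $Z_n=\set{x}$ we are done, otherwise pick $p\in Z_n\setminus\set{x}$, separate by regularity via open $O\ni p$ with $x\notin\overline O$, and put $V=X\setminus\overline O\in\mathcal F_x$. Adding to $B_n$ any $b\ge\varphi(V)$ gives, by $W_{\varphi(V)}\sst V$ and monotonicity, $\overline{W_b}\sst X\setminus O$, so this single step removes a whole neighborhood of $p$ from the intersection. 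The difficulty is that, a priori, this recursion seems to need uncountably many steps. To keep it of length $\o$ I would, at the same time, arrange the chosen $b$'s to be $\rho$-controlled so that the increasing sequence they form $\rho$-converges to some $d\in D$; strong basic generation then furnishes a single $b^\ast\ge b_n$ for all $n$, and countable tightness guarantees that any surviving accumulation point $p\in\bigcap_n Z_n$ is ``carried'' by a countable subconfiguration, so that a single such $b^\ast$ suffices to exclude it (giving $p\notin\overline{W_{b^\ast}}$), while countable compactness is what produces $p$ in the first place.

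The main obstacle is exactly the tension inside Step 3: to \emph{apply} strong basic generation I need a genuinely $\rho$-convergent sequence in $D$, yet to \emph{kill} accumulation points I need the sequence to climb in the order past elements of the form $\varphi(X\setminus\overline O)$, and metric density does not by itself yield order domination of such a fixed element. Reconciling these — using strong basic generation to convert a $\rho$-convergent sequence into an order upper bound with metric control, countable tightness to reduce the obstruction at the bad point $p$ to one such sequence, and countable compactness both to supply $p$ and to run the finite-intersection argument of Step 2 — is the delicate heart of the proof, and is where I expect essentially all the work to lie.
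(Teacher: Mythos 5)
Your Steps 1 and 2 are correct: the monotone family $W_b=\operatorname{int}\bigcap\{U\in\mathcal F_x:\varphi(U)\le_D b\}$ is exactly the cofinal map $g:D\to\mathcal F_x$ that the paper's proof also begins with, and the reduction via a closed countably compact neighborhood $N$ (finite intersections plus the finite-intersection-property argument) is sound. But Step 3 is the theorem, and you have not proved it; you say so yourself. Moreover, the recursion you sketch has a concrete failure mode, which you correctly identify but do not resolve: the elements you must dominate at each stage have the form $\varphi(X\setminus\overline O)$ for various bad points $p$, and the Tukey map $\varphi$ gives no metric information about them whatsoever, so there is no way to arrange the chosen $b$'s into a $\rho$-convergent sequence; without a convergent sequence, strong basic generation never applies, and without it the recursion has no visible countable bound. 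So the proposal stops exactly where the real work starts.

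The missing idea is that countability should come not from a point-killing recursion but from the Lindel\"of property of the separable metric $(D,\rho)$, applied to a cover by metric balls. For $d\in D$ and $n<\omega$ set $S_n(d)=\bigcap\{W_e:\rho(e,d)<1/n\}$, an intersection over a whole ball rather than over a cleverly chosen countable set. One then proves: for every $d$ there exist $n(d)<\omega$ and an open $V_d$ with $x\in V_d\subseteq S_{n(d)}(d)$. This goes in two stages. First, if $x\in\overline{X\setminus\bigcup_n S_n(d)}$, countable tightness gives countably many witnesses $y_n$, and a diagonal choice gives $d_n\in B_d(1/n)$ with $y_n\notin W_{d_n}$; since $d_n\to d$ metrically, strong basic generation yields $d^*\ge d_n$ for all $n$, and the single neighborhood $W_{d^*}$ misses every $y_n$, a contradiction; hence $\bigcup_n S_n(d)$ contains an open $U\ni x$. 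Second, regularity and local countable compactness give open $V$ with $x\in V\subseteq\overline V\subseteq U$ and $\overline V$ countably compact; if $V\not\subseteq S_n(d)$ for all $n$, witnesses $y_n\in V\setminus S_n(d)$ have a limit point $y\in\overline V$, and for each $m$ one applies strong basic generation to the tail sequence, now using the metric-control clause $\rho(d,d^*_m)\le\sup_n\rho(d,d_n)<1/m$ (this is precisely where that clause is indispensable) to get $d^*_m\in B_d(1/m)$ with $y\notin W_{d^*_m}\supseteq S_m(d)$; hence $y\notin\bigcup_m S_m(d)$, contradicting $y\in\overline V\subseteq U$. Finally, the balls $B_d(1/n(d))$ cover $D$; separability gives a countable subcover indexed by $d_m$, and $\{V_{d_m}:m<\omega\}$ is a countable base at $x$: any $W\in\mathcal F_x$ contains some $W_d$ by cofinality, $d$ lies in some $B_{d_m}(1/n(d_m))$, and then $V_{d_m}\subseteq S_{n(d_m)}(d_m)\subseteq W_d\subseteq W$. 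Note that in this scheme your Step 2 is not even needed; local countable compactness is consumed inside the second stage above.
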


\begin{proof}
	Observe first that if $x$ is an isolated point in $X$, then $\set{\set{x}}$ is a local base at $x$.
	Thus we may assume that $x$ is not an isolated point in $X$.

	
	Let $f: \mathcal F_{x} \to D$ be a Tukey reduction.
	Define $g: D \to \mathcal F_{x}$ by
	$$g(d) = \bigcap \set{a \in \mathcal F_{x} : f(a) \leq_{D} d}.$$
	First let us explain why $g$ is a well defined function.
	Since the set $$d\!\downarrow = \set{x \in D : x \leq_{D} d}$$ is bounded and $f$ is a Tukey function, the inverse image of $d\!\downarrow$ under $f$ is also bounded, i.e. the set 
	$$
	f^{-1}[d\!\downarrow] = \set{a \in \mathcal F_{x} : f(a) \leq_{D} d}
	$$
	is bounded in $\mathcal F_x$.
	Hence there is $b \in \mathcal F_{x}$ such that $b \sst a$ for all $a \in \mathcal F_{x}$ satisfying $f(a) \leq_{D} d$.
	Thus $b \sst \bigcap\set{a \in \mathcal F_{x} : f(a) \leq_{D} d}$, which implies that $\bigcap\set{a \in \mathcal F_{x} : f(a) \leq_{D} d}\in\mathcal F_x$ and so the function $g$ is well defined.
	
	Now we will show that $g$ is monotone and has a cofinal range.
	First we show the monotonicity of $g$: Let $d, e \in D$ and let $d \leq_{D} e$. Then
	$$
	g(d) = \bigcap\set{a \in \mathcal F_{x} : f(a) \leq_{D} d} \supseteq \bigcap\set{a \in \mathcal F_{x} : f(a) \leq_{D} e} = g(e),
	$$
	i.e. $g$ is a monotone function.

	Next we show that the range of $g$ is cofinal: Let $b \in \mathcal F_{x}$.
	Then $f(b) \in D$ and 
	$$
	g(f(b)) = \bigcap\set{a \in \mathcal F_{x} : f(a) \leq_{D} f(b)}.
	$$
	Clearly $b \in \set{a \in \mathcal F_{x}: f(a) \leq_{D} f(b)}$, so
	$$
	g(f(b)) = \bigcap\set{a \in \mathcal F_{x} : f(a) \leq_{D} f(b)} \sst b.
	$$
	Thus, $g[D]$ is cofinal in $\mathcal F_x$.

	Recall that we consider $D$ as a separable metric space, so let $\rho$ be the corresponding metric which makes $D$ strongly basically generated.
	For $d \in D$ and $\epsilon > 0$, let
	$$
	B_{d}(\epsilon) = \set{e \in D : \rho(e,d) < \epsilon},
	$$
	i.e. $B_d(\epsilon)$ is an open ball centered at $d$ of radius $\epsilon$.
	Next, we prove that for each $d\in D$, there is $n<\o$ such that $g[B_d(1/n)]$ has nonempty interior.
	We will first need a claim.
	For $d\in D$, let us define
	$$
	S(d)=\bigcup_{n<\o}\left(\bigcap_{e\in B_d(1/n)}g(e)\right).
	$$
	Define also $S_n(d)=\bigcap g\left[B_d(1/n)\right]$ and note that $S(d)=\bigcup_{n<\o}S_n(d)$.

	\begin{claim}\label{c:laksi}
		For each $d\in D$, there is an open set $U$ such that $x\in U\sst S(d)$.
	\end{claim}
	\begin{proof}
		Suppose that the claim fails.
		Then $x\in\overline{X\setminus S(d)}$ for some $d\in D$.
		Since $X$ is countably tight, there is a set $\set{y_n:n<\o}\sst X\setminus S(d)$ such that $x\in\overline{\set{y_n:n<\o}}$.
		Note that $$X\setminus S(d)=\bigcap_{n<\o}\left(\bigcup_{e\in B_d(1/n)}X\setminus g(e)\right).$$
		Hence, for each $n$, there is some $d_n\in B_d(1/n)$ such that $y_n\notin g(d_n)$.
		Since the sequence $\set{d_n:n<\o}$ is converging to $d$ and $D$ is strongly basically generated, there is $d^*\in D$ such that $d_n\le d^*$ for each $n<\o$.
		Since $g$ is monotone, this means that $x\in g(d^*)\sst g(d_n)$ for each $n<\o$.
		Thus, we found a neighborhood of $x$ containing no point from the set $\set{y_n:n<\o}$.
		This contradicts the choice of the set $\set{y_n:n<\o}$. 
	\end{proof}
	Now we improve the conclusion of Claim \ref{c:laksi} to a smaller set, as promised.
	\begin{claim}\label{c:tezi}
		For each $d\in D$, there are open set $V$ and $n<\o$ so that $x\in V\sst S_n(d)$.
	\end{claim}
	\begin{proof}
		Fix $d\in D$.
		Let $U$ be as in the conclusion of Claim \ref{c:laksi}, i.e. $U$ is open such that $x\in U\sst S(d)$.
		Since $X$ is regular and locally countably compact, there is an open set $V$ such that $x\in V\sst \overline{V}\sst U$ and that $\overline{V}$ is countably compact.
		We will prove that $V$ is as required in the claim.
		Suppose it is not the case.
		Then for every $n<\o$, $V\not\subseteq S_n(d)$ so there is $y_n\in V\setminus S_n(d)$.
		Note that $y_n$'s can be chosen to be different, i.e. the set $A=\set{y_n:n<\o}$ is infinite.
		Since $A\sst V$, we have $\overline{A}\sst \overline{V}\sst S(d)$.
		
		One the other hand, since $\overline{V}$ is countably compact, there is at least one limit point $y$ of $A$.
		The point $y$, being a limit point of $A$, is a limit point of each $A_m=\set{y_n:n>m}$ (for $m<\o$).
		Consider now the set $A_m$ for some $m<\o$.
		We know that for each $n>m$:
		$$y_n\in X\setminus S_n(d)=X\setminus\bigcap_{e\in B_d(1/n)}g(e)=\bigcup_{e\in B_d(1/n)}X\setminus g(e).$$
		From this, it follows that for each $n>m$ there is some $d_n\in B_d(1/n)$ such that $y_n\in X\setminus g(d_n)$.
		Clearly the sequence $\set{d_n:n>m}$ is converging to $d$.
		Since $D$ is strongly basically generated, there is some $d_m^*$ such that $d_n\le d_m^*$ for each $n>m$, and that moreover $\rho(d_m^*,d)<\frac{1}{m}$.
		Note that by monotonicity of $g$, this means that $y_n\in X\setminus g(d_m^*)$ for each $n>m$, which further implies that $y\in X\setminus g(d_m^*)$. 
		Now from $d_m^*\in B_d(1/m)$ we have $g(d_m^*)\supseteq \bigcap_{e\in B_d(1/m)}g(e)=S_m(d)$, which gives
		$$y\in X\setminus g(d_m^*)\sst X\setminus S_m(d).$$
		To summarize, we proved that for every $m<\o$, the point $y$ belongs to $X\setminus S_m(d)$.
		Thus $$y\in \bigcap_{m<\o}X\setminus S_m(d)=X\setminus \bigcup_{m<\o}S_m(d)=X\setminus S(d)$$ directly contradicting the fact that $\overline A\sst S(d)$.
		Hence, for some $n<\o$, we have that $x\in V\sst S(d)$.
	\end{proof}
	To finish the proof of the theorem first for each $d\in D$, take $n(d)<\o$ given by Claim \ref{c:tezi}.
	Now for each $d\in D$ there is an open set $V_d$ such that $x\in V_d\sst \bigcap g[B_d(1/n(d))]$.
	Take an open cover $\set{B_d(1/n(d)):d\in D}$ of the metric space $D$.
	By Lindel\"{o}f's theorem, there is a countable set $\set{d_m:m<\o}\sst D$ such that $\cu=\set{B_{d_m}(1/n(d_m)):m<\o}$ is an open cover of $D$.
	To prove that $\set{V_{d_m}:m<\o}$ is a countable local base of $x$, take any open set $W$ containing $x$.
	By cofinality of $g$, there is some $d\in D$ such that $g(d)\sst W$.
	Since $\cu$ is an open cover of $D$, there is some $m<\o$ such that $d\in B_{d_m}(1/n(d_m))$.
	Then $$x\in V_{d_m}\sst \bigcap g[B_{d_m}(1/n(d_m))]\sst g(d)\sst W.$$
	This proves the theorem.
\end{proof}

\begin{theorem}\label{t:main}
	Every countably tight, locally countably compact, and strongly basically generated topological group is metrizable.
\end{theorem}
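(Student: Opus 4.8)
The plan is to deduce Theorem~\ref{t:main} as a direct application of Theorem~\ref{l:main} to the space $X=G$ at the point $x=e$, where $e$ is the identity of $G$, and then to convert the resulting countable local base into metrizability via the Birkhoff--Kakutani theorem. So the work splits into two parts: fitting the hypotheses of the group into the hypotheses of Theorem~\ref{l:main}, and reading off metrizability from its conclusion.

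First I would verify that $G$, regarded as a topological space together with the point $e$, satisfies all the hypotheses of Theorem~\ref{l:main}. Two of the three topological hypotheses are part of the assumption of Theorem~\ref{t:main}: $G$ is countably tight and locally countably compact. For regularity I would invoke the standard fact that every Hausdorff topological group is completely regular, hence regular, so this hypothesis comes for free. It then remains to check that the neighborhood filter $\mathcal F_e$ of $e$ is Tukey reducible to some strongly basically generated directed set $D$, and this is precisely where the hypothesis that $G$ is strongly basically generated is used: by definition it means $G\le_T D$ for some strongly basically generated directed set $D$, i.e.\ there is a local base $\mathcal N$ of $e$ with $\mathcal N\le_T D$. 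Since any local base $\mathcal N$ is a cofinal subset of $(\mathcal F_e,\supseteq)$, Remark~\ref{cofinalsubset} gives $\mathcal F_e\equiv_T\mathcal N$, and transitivity of $\le_T$ yields $\mathcal F_e\le_T D$. Thus every hypothesis of Theorem~\ref{l:main} is met, and it supplies a countable local base of $e$ in $G$.

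The final step is to turn a countable local base at $e$ into metrizability. A countable local base is a countable cofinal subset of $(\mathcal F_e,\supseteq)$, so $G\le_T\o$; equivalently $G$ is first countable, and a first-countable topological group is metrizable by the Birkhoff--Kakutani theorem (this is exactly the characterization $G\le_T\o$ of metrizability recorded in the introduction). I do not expect a genuine obstacle, since all of the analytic content has already been isolated in Theorem~\ref{l:main}; the only points needing care are the two bookkeeping facts used to slot the group into that lemma, namely that topological groups are regular and that a countable local base at the identity is the same as $G\le_T\o$, hence the same as metrizability.
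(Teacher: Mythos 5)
Your proposal is correct and follows exactly the paper's route: the paper also deduces Theorem~\ref{t:main} directly from the regularity of topological groups, Theorem~\ref{l:main} applied at the identity, and the Birkhoff--Kakutani theorem. The only difference is that you spell out the bookkeeping (passing from a local base $\mathcal N$ with $\mathcal N\le_T D$ to the neighborhood filter $\mathcal F_e$ via Remark~\ref{cofinalsubset}), which the paper leaves implicit.
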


\begin{proof}
	The proof follows directly from the fact that every topological group is regular, Theorem \ref{l:main}, and Birkhoff-Kakutani theorem.
\end{proof}

Note that the proof of previous two theorems closely follows the proof of Todorcevic in \cite[Theorem 2.1 and Corollary 2.2]{basicstevo} where he shows that every basic Frech\'{e}t topological group is metrizable.
The notion of a basic group is connected to our notion of strongly basically generated.
Namely, we say that a topological group is \emph{basic} if $G\le_T E$ for some basic order $E$ (basic order in a sense of Solecki and Todorcevic \cite[p. 1881]{slawek}).

\section{Products}

A natural question when considering cofinal types of topological groups is how they behave with respect to products.
It is well known that if $\set{G_i:i\in I}$ is a collection of topological groups, then $G=\prod_{i\in I}G_i$ with the product topology is a topological group with the identity $e$.
As one might assume, in this case, the cofinal type of $G$ is the finite support product of types of $G_i$'s.
In this section we prove a bit more general fact than this.
First we need a definition.

\begin{definition}\label{d:ksup}
 Let $\set{D_i:i\in I}$ be a collection of partially ordered sets.
 Suppose that $D_i$ has a minimum $0_i$ for each $i\in I$.
 Let $\k$ be an infinite regular cardinal.
 We define the $\k$-support product of posets $P_i$ as follows:
 $$\prod_{i\in I}^{\k-\operatorname{supp}}D_i=\set{x\in\prod_{i\in I}D_i: \abs{\set{i\in I: x_i\neq 0_i}}<\k}.$$
\end{definition}

Note that, as in the case of topology, the finite support product of $D_i$'s is simply $\prod_{i\in I}^{\o-\operatorname{supp}}D_i$.
Now we are able to present the central result of this section.
 
\begin{theorem}
 Let $\k$ be an infinite regular cardinal.
 Let $\set{G_i:i\in I}$ be a collection of topological groups such that $G_i\equiv_T D_i$ where $D_i$ is a directed set with the minimum $0_i$, for each $i\in I$.
 Suppose that $G=\prod_{i\in I}G_i$ with the $\k$-box topology, and that $D=\prod_{i\in I}^{\k-\operatorname{supp}}D_i$.
 Then $G\equiv_T D$.
\end{theorem}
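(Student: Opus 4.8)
The plan is to factor the equivalence $G\equiv_T D$ through the $\k$-support product of the local bases $\mathcal N_i$. Fix for each $i$ a local base $\mathcal N_i$ of $e_i$ in $G_i$; by enlarging it if necessary we may assume $G_i\in\mathcal N_i$, so that $(\mathcal N_i,\supseteq)$ has a minimum, namely $G_i$ itself (this does not change its cofinal type, since the old base stays cofinal, cf. Remark~\ref{cofinalsubset}). By Theorem~\ref{t:kbox} the collection $\mathcal N=\set{\bigcap_{i\in K}\pi_i^{-1}[B_i]:K\in[I]^{<\k},\ (\forall i\in K)\ B_i\in\mathcal N_i}$ is a local base of $e$ in $G$, so it suffices to prove $(\mathcal N,\supseteq)\equiv_T D$. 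First I would identify $(\mathcal N,\supseteq)$ with $\prod_{i\in I}^{\k-\operatorname{supp}}\mathcal N_i$. Define $\Phi:\prod_{i\in I}^{\k-\operatorname{supp}}\mathcal N_i\to\mathcal N$ by $\Phi(x)=\bigcap_{i:\,x_i\neq G_i}\pi_i^{-1}[x_i]$; this is well defined (the support is $<\k$) and surjective. It is order preserving, since $x_i\supseteq y_i$ for all $i$ gives $\Phi(x)\supseteq\Phi(y)$. The crucial point is that it is also order reflecting: if $\Phi(x)\supseteq\Phi(y)$, then fixing a coordinate $j$ and any $p_j\in y_j$, the point with $j$-th coordinate $p_j$ and all other coordinates equal to the corresponding identity lies in $\Phi(y)$ (each $y_i$ contains $e_i$), hence in $\Phi(x)$, forcing $p_j\in x_j$ (the case $x_j=G_j$ being trivial); thus $y_j\sst x_j$ for every $j$. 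Therefore $x\le y\iff\Phi(x)\le\Phi(y)$, and since $\Phi$ is onto, both $\Phi$ and any right inverse of it are cofinal maps by Lemma~\ref{l:cofinalmap}, giving $(\mathcal N,\supseteq)\equiv_T\prod_{i\in I}^{\k-\operatorname{supp}}\mathcal N_i$.

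Next I would prove a general lemma: if $\set{D_i:i\in I}$ and $\set{E_i:i\in I}$ are directed sets with minima and $D_i\equiv_T E_i$ for each $i$, then $\prod_{i\in I}^{\k-\operatorname{supp}}D_i\equiv_T\prod_{i\in I}^{\k-\operatorname{supp}}E_i$. For each $i$ choose a cofinal map $g_i:E_i\to D_i$ witnessing $D_i\le_T E_i$, and modify it so that $g_i(0_{E_i})=0_{D_i}$; one checks that resetting the value at the minimum to the minimum cannot destroy cofinality, because the minimum lies in a final segment only when the witnessing threshold is itself the minimum, and in a nontrivial directed set that threshold can always be chosen strictly above it. Now define $g:\prod_{i\in I}^{\k-\operatorname{supp}}E_i\to\prod_{i\in I}^{\k-\operatorname{supp}}D_i$ coordinatewise by $g(x)_i=g_i(x_i)$. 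Since each $g_i$ sends $0_{E_i}$ to $0_{D_i}$, the support of $g(x)$ is contained in that of $x$, so $g$ really maps the $\k$-support product into the $\k$-support product; and a coordinatewise argument (treating the $<\k$ coordinates in the support of a target, and using $0_{D_i}\le$ everything elsewhere) shows $g$ is cofinal. Hence $\prod_{i\in I}^{\k-\operatorname{supp}}D_i\le_T\prod_{i\in I}^{\k-\operatorname{supp}}E_i$, and the symmetric choice gives the reverse reduction.

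Combining the two steps finishes the proof: by hypothesis $\mathcal N_i\equiv_T G_i\equiv_T D_i$, so the general lemma yields $\prod_{i\in I}^{\k-\operatorname{supp}}\mathcal N_i\equiv_T\prod_{i\in I}^{\k-\operatorname{supp}}D_i=D$, while the first step gives $(\mathcal N,\supseteq)\equiv_T\prod_{i\in I}^{\k-\operatorname{supp}}\mathcal N_i$; transitivity of $\equiv_T$ then delivers $G\equiv_T D$. I expect the main obstacle to be the general product lemma, and specifically the insistence on minimum-preserving cofinal maps: without it the coordinatewise image of a small-support point can acquire full support and leave the $\k$-support product, so the technical heart is verifying that a cofinal map may be adjusted to fix the minimum without losing cofinality. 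The order-reflecting step in the identification of $\mathcal N$ is the other place where the genuine group structure enters, through the fact that each basic neighborhood $B_i$ contains the identity $e_i$.
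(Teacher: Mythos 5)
Your decomposition is genuinely different from the paper's: the paper does not isolate a product-preservation lemma, but instead, for each $i$, invokes Theorem \ref{cofinalsimilarity} to obtain a single poset $P_i$ containing cofinal copies of both $\mathcal N_i$ and $D_i$, forms $P=\prod_{i\in I}^{\k-\operatorname{supp}}P_i$, and checks that the coordinatewise maps embed $\mathcal N$ and $D$ cofinally into $P$. Your first step is correct and is in fact a cleaner way to package what the paper leaves implicit in its use of $\pi_i(a)$: the order isomorphism $\Phi$ between $\prod_{i\in I}^{\k-\operatorname{supp}}\mathcal N_i$ and $\mathcal N$, with order-reflection proved by evaluating at points equal to $e_i$ off a single coordinate, is exactly right, and it isolates where the group structure enters.

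The gap is in your general lemma, and it cannot be patched, because the lemma --- and in fact the theorem itself --- is false as stated. Your normalization step is hedged by the phrase ``in a nontrivial directed set,'' but the hypotheses allow singleton factors, and there normalization is impossible: if $E_i=\set{0_{E_i}}$ and $\abs{D_i}>1$, then any cofinal map $E_i\to D_i$ must send $0_{E_i}$ to a maximum of $D_i$, not to $0_{D_i}$. This degeneracy is fatal. Take $I=\o_1$, $\k=\o$, $D_i=\set{0_i}$ a singleton, and $G_i=\Z/2\Z$ discrete, so that $G_i\equiv_T 1\equiv_T D_i$ for every $i$ and all hypotheses of the theorem hold. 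Then $D=\prod_{i<\o_1}^{\o-\operatorname{supp}}D_i$ is a single point, while $G=2^{\o_1}$ with the product topology satisfies $G\equiv_T[\o_1]^{<\o}$ (as recorded in the paper's own example of a group that is not countably tight); since $[\o_1]^{<\o}\not\le_T 1$, we get $G\not\equiv_T D$. The same data (with $E_i=\mathcal N_i$) refute your lemma directly. To be fair, this is not a defect of your write-up alone: the paper's proof hides the identical gap, since its maps $f$ and $g$ take values in $\prod_{i\in I}^{\k-\operatorname{supp}}P_i$ only if each $P_i$ has a minimum and $f_i$, $g_i$ send $G_i$ and $0_i$ to that common minimum, which is impossible in the situation above (a cofinal singleton image forces $P_i$ to be a singleton, contradicting injectivity of $g_i$). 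The statement becomes true, and your argument then goes through essentially verbatim, under the additional hypothesis that for every $i$ (or all but fewer than $\k$ many $i$) the set $D_i$ is a singleton if and only if $G_i$ is the trivial group: matched singleton coordinates are handled by the unique maps, and your modification handles the rest. So you correctly identified the technical heart --- minimum-preserving cofinal maps --- but the honest resolution is to add this matching condition as a hypothesis rather than to claim the normalization can always be arranged.
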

 
\begin{proof}
	Let $\mathcal{N}_i$ be a local base of the identity $e_i$ of the group $G_i$.
 	According to Theorem \ref{t:kbox}, one local base of the identity of $G$ is given by sets $\bigcap_{i\in K}\pi_i^{-1}[B_i]$ where $K\in [I]^{<\k}$ and $B_i\in \mathcal{N}_i$ for all $i\in K$.
	Let $i\in I$.
	Since $G_i\equiv_T D_i$, i.e. $(\mathcal{N}_i,\supseteq)\equiv_T (D_i,\le_i)$, by Theorem \ref{cofinalsimilarity} there is a poset $P_i$ and embeddings $f_i:(\mathcal{N}_i,\supseteq)\to P_i$ and $g_i:D_i\to P_i$ such that $f_i[\mathcal{N}_i]$ and $g_i[D_i]$ are cofinal subsets of $P_i$.
	Consider now $P=\prod_{i\in I}^{\k-\operatorname{supp}}P_i$.
	To prove the theorem, it is sufficient to find embeddings $f:\mathcal{N}\to P$ and $g:D\to P$ such that $f[\mathcal{N}]$ and $g[D]$ are cofinal subsets of $P$.
 	Let $f$ and $g$ be given by $f(a)=\seq{f_i(\pi_i(a)):i\in I}$ and $g(d)=\seq{g_i(\pi_i(d)): i\in I}$.
	We will now show that $f$ and $g$ satisfy announced conditions.
 	
 	\begin{claim}\label{cl:emb}
  		The map $f$ is an embedding and $f[\mathcal{N}]$ is a cofinal subset of $P$.
 	\end{claim}
 	
 	\begin{proof}
		It is clear that $f$ is a well defined function.
		First we show that $f$ is one-to-one.
		Let $a\neq b$ be different elements of $\mathcal{N}$.
		This means tht there is an $i\in I$ such that $a_i\neq b_i$.
		Then $\pi_i(a)=a_i\neq b_i=\pi_i(b)$.
		Since $f_i$ is an embedding, $f_i(\pi_i(a))\neq f_i(\pi_i(b))$.
		Hence $f(a)\neq f(b)$ so $f$ is one-to-one.
 		
		Next we show that $f$ is a monotone function, i.e. that $a\le b$ in $\mathcal{N}$ implies $f(a)\le f(b)$.
  		Let $a\le b$ in $\mathcal{N}$.
		This means that $a_i\supseteq b_i$ for all $i\in I$, hence $\pi_i(a)=a_i\supseteq b_i=\pi_i(b)$.
		Since $f_i$ is an embedding for each $i\in I$, we have that $f_i(\pi_i(a))\le f_i(\pi_i(b))$ for all $i\in I$.
  		Then $f(a)\le f(b)$, as we were supposed to show, so $f$ is an embedding.

		Finally, we prove that $f[\mathcal{N}]$ is a cofinal subset of $P$.
		Let $x\in P$.
		Then $J=\set{i\in I: x_i\neq 0_i}$ is of cardinality less than $\k$.
		Since $f_i[\mathcal{N}_i]$ is a cofinal subset of $P_i$ for all $i\in I$, for each $i\in J$ we can choose an element $B_i\in\mathcal{N}_i$ such that $x_i\le f_i(B_i)$.
		Let us now consider the element $B=\bigcap_{i\in J}\pi_i^{-1}[B_i]$.
  		Since $J$ is of cardinality less than $\k$, and $B_i\in \mathcal{N}_i$ for all $i\in J$, we know that $B\in \mathcal{N}$.
  		Now we show that $x\le f(B)$ which will finish the proof.
  		It is sufficient to prove that $x_i\le \pi_i(f(B))$ for all $i\in I$.
  		If $i\in I\setminus J$, then $x_i=0_i$ so in that case $x_i\le \pi_i(f(B))$.
		Suppose now that $i\in J$.
  		Then $\pi_i(f(B))=f_i(\pi_i(B))=f_i(B_i)\ge x_i$, as promised.
 	\end{proof}
 	
 	\begin{claim}
  		The map $g$ is an embedding and $g[D]$ is a cofinal subset of $P$.
 	\end{claim}
 	
 	\begin{proof}
		Almost the same proof as in Claim \ref{cl:emb}.
 	\end{proof}
 	This proves the theorem.
\end{proof}
 
Note that this theorem is basically just a translation of the fact that cofinal similarity is preserved under $\k$-support products of directed sets.
Essentially the same argument would give this fact.
That proof can also be adapted to show that cofinal similarity is preserved under reduced products modulo any ideal over the index set.
The last theorem also has an immediate corollary, the one we announced at the beginning of this section.
 
\begin{corollary}
 Let $\set{G_i:i\in I}$ be a collection of topological groups.
 Let $G_i\equiv_T D_i$ where $D_i$ is a directed set with the minimum $0_i$, for each $i\in I$.
 Then $\prod_{i\in I}G_i$ with the product topology is cofinally similar to the finite support product of $D_i$'s, i.e. to $\prod_{i\in I}^{\o-\operatorname{supp}}D_i$.
\end{corollary}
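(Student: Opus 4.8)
The plan is to apply the previous theorem in the special case $\k=\o$; the corollary is a verbatim instantiation of it, so almost all the work has already been done. First I would recall the two terminological translations recorded in the surrounding text. By the remark immediately following Definition \ref{d:kbox}, the product topology on $\prod_{i\in I}G_i$ is precisely the $\o$-box topology. By the remark immediately following Definition \ref{d:ksup}, the finite support product of the $D_i$'s, written $\prod_{i\in I}^{\o-\operatorname{supp}}D_i$, is by definition nothing other than the $\o$-support product in the sense of that definition. Thus the two notions appearing in the statement of the corollary are exactly the $\k=\o$ cases of the two notions appearing in the theorem.

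Next I would check that the hypotheses of the theorem are met with this choice of $\k$. The cardinal $\o$ is infinite and regular, so it is an admissible value for $\k$. Each $D_i$ is assumed to be a directed set with a minimum element $0_i$, and we are given $G_i\equiv_T D_i$ for every $i\in I$; these are exactly the standing assumptions of the theorem. Setting $G=\prod_{i\in I}G_i$ with the product topology $=$ $\o$-box topology and $D=\prod_{i\in I}^{\o-\operatorname{supp}}D_i$, the theorem applies and delivers $G\equiv_T D$, which is the asserted cofinal similarity.

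There is essentially no obstacle here: the only content is recognizing that ``product topology'' and ``finite support product'' are precisely the $\o$-box topology and the $\o$-support product, and both identifications are already established in the paper right after the corresponding definitions. Consequently the proof reduces to a single invocation of the theorem, and no further argument is required.
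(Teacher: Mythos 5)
Your proposal is correct and is exactly how the paper treats this corollary: the paper presents it as an immediate consequence of the preceding theorem instantiated at $\k=\o$, using the same two identifications (product topology $=$ $\o$-box topology, finite support product $=$ $\o$-support product) that it records after Definitions \ref{d:kbox} and \ref{d:ksup}. No further comment is needed.
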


\begin{example}
	Consider for a moment $S_{\k}$, the symmetric group on a regular infinite cardinal $\k$, i.e. the group of all bijections from $\k$ onto $\k$.
	This group can be equipped with the $\l$-box topology for any regular infinite cardinal $\l$.
	In this case we have $S_{\k}\equiv_T [\k]^{<\l}$ simply by the definition of $\l$-box topology.
	Then we have, for example:
	$$\prod_{n<\o}S_{\o_n}\equiv_T \prod_{n<\o}^{\o-\operatorname{supp}}[\o_n]^{<\o},$$
	when on the entire product is given the usual product topology.
\end{example}

\section{Acknowledgements}

The authors would like to thank Stevo Todorcevic for many useful conversations.
The first author was partially supported by grants from the Science Fund of the Republic of Serbia (Grant No. 7750027 - SMART), and the Ministry of Science, Technological Development and Innovation of the Republic of Serbia (Grants No. 451-03-66/2024-03/200125 and 451-03-65/2024-03/200125). The second author has been partially supported by the Ministry of Science, Technological Development and Innovation (Contract No. 451-03-65/2024-03/200156) and the Faculty of Technical Sciences, University of Novi Sad through project “Scientific and Artistic Research Work of Researchers in Teaching and Associate Positions at the Faculty of Technical Sciences, University of Novi Sad” (No. 01-3394/1).

\providecommand{\bysame}{\leavevmode\hbox to3em{\hrulefill}\thinspace}
\providecommand{\MR}{\relax\ifhmode\unskip\space\fi MR }
\providecommand{\MRhref}[2]{%
  \href{http://www.ams.org/mathscinet-getitem?mr=#1}{#2}
}
\providecommand{\href}[2]{#2}

\end{document}